\documentclass[12pt]{article}
\usepackage{amssymb}
\usepackage{amsmath}
\usepackage{amsthm}
\usepackage{yhmath}
\usepackage{mathdots}
\usepackage{MnSymbol}
\usepackage{a4wide}
\usepackage{color}
\usepackage[left=2.5cm,top=2cm,right=2cm]{geometry}
\usepackage{enumitem}
\usepackage{nccmath}
\usepackage{blkarray}
\usepackage{multirow}
\usepackage{float}
\usepackage{booktabs}
\usepackage{arydshln}
\usepackage[square,numbers]{natbib}
\bibliographystyle{plain}

\usepackage{graphicx}
\graphicspath{ {/Users/neha/Desktop/NLA P4/Paper4} }
\usepackage{subfigure}


\usepackage{natbib}
\usepackage[english]{babel}
\usepackage{algorithm}
\usepackage{algpseudocode}
\newtheorem{problem}{\bf Problem}[section]
\newtheorem{theorem}{\bf Theorem}[section]
\newtheorem{definition}[theorem]{\bf Definition}
\newtheorem{corollary}[theorem]{\bf Corollary}
\newtheorem{exam}[theorem]{\bf Example}
\newtheorem{remark}[theorem]{\bf Remark}
\newtheorem{lemma}[theorem]{\bf Lemma}

\SetLabelAlign{CenterWithParen}{\hfil(\makebox[1.0em]{#1})\hfil}
\def \R{{\mathbb R}}
\def \C{{\mathbb C}}
\def \SR{{\mathbb{S}\mathbb{R}}}
\def \ASR{{\mathbb{A}\mathbb{S}\mathbb{R}}}
\def \rank{\mathrm{rank}}
\def \vec{\mathrm{vec}}
\def \QR{\mathbb{Q}_\mathbb{R}}
\def \HQR{\mathbb{H}\mathbb{Q}_\mathbb{R}}
\def \HC{\mathbb{H}\mathbb{C}}
\def \i{\textit{\textbf{i}}}
\def \j{\textit{\textbf{j}}}
\def \k{\textit{\textbf{k}}}
\newcommand\norm[1]{\left\lVert#1\right\rVert}
\newcommand{\Rn}[1]{%
	\textup{\lowercase\expandafter{\romannumeral#1}}%
}

\def \diag{\mathrm{diag}}

\def \noin{\noindent}
\newcommand{\beano}{\begin{eqnarray*}}	\newcommand{\eeano}{\end{eqnarray*}}
%



\renewcommand{\thefootnote}{\fnsymbol{footnote}}

\date{\today}
\title{Special least squares solutions of the reduced biquaternion matrix equation with applications
	
	\footnotemark[2]}
\author{Sk. Safique Ahmad\footnotemark[1] \and Neha Bhadala \footnotemark[3]}

\begin{document}
	\maketitle
	\begin{abstract}
This paper presents an efficient method for obtaining the least squares Hermitian solutions of the reduced biquaternion matrix equation $(AXB, CXD) = (E, F)$. The method leverages the real representation of reduced biquaternion matrices. Furthermore, we establish the necessary and sufficient conditions for the existence and uniqueness of the Hermitian solution, along with a general expression for it. Notably, this approach differs from the one previously developed by Yuan et al. $(2020)$, which relied on the complex representation of reduced biquaternion matrices. In contrast, our method exclusively employs real matrices and utilizes real arithmetic operations, resulting in enhanced efficiency. We also apply our developed framework to find the Hermitian solutions for the complex matrix equation $(AXB, CXD) = (E, F)$, expanding its utility in addressing inverse problems. Specifically, we investigate its effectiveness in addressing partially described inverse eigenvalue problems. Finally, we provide numerical examples to demonstrate the effectiveness of our method and its superiority over the existing approach. 
	\end{abstract}
	
	\noindent {\bf Keywords.} Least squares problem, Reduced biquaternion matrix equation, Real representation matrix, Partially described inverse eigenvalue problem.
	
	\noindent {\bf AMS subject classification.} 15B33, 15B57, 65F18, 65F20, 65F45.
	
	\renewcommand{\thefootnote}{\fnsymbol{footnote}}
	
	\footnotetext[1]{
		Department of Mathematics, Indian Institute of Technology Indore, Simrol, Indore-452020, Madhya Pradesh, India. \texttt{email:safique@iiti.ac.in, safique@gmail.com}.}
	
	\footnotetext[3]{Research Scholar, Department of Mathematics, IIT Indore, Research work funded by PMRF (Prime Minister's Research Fellowship). \texttt{email:phd1901141004@iiti.ac.in, bhadalaneha@gmail.com}}
	\section{Introduction}\label{sec1}
	Matrix equations play a vital role in various fields, including computational mathematics, vibration theory, stability analysis, and control theory (for example, \cite{datta2004numerical,gantmacher2005applications,MR1407657}). Due to their significant applications in various fields, a substantial amount of research has been dedicated to solving matrix equations. See \cite{MR1976921,MR0279115,MR1972682,MR1637924} and references therein. In this paper, we address the following matrix equation: 
	\begin{equation}\label{eq1.1}
	(AXB, CXD) = (E, F), 
	\end{equation}
	where $A$, $B$, $C$, $D$, $E$, and $F$ are given matrices of appropriate sizes, and $X$ represents the unknown matrix of an appropriate size. This matrix equation has been extensively studied in the real, complex, and quaternion fields, leading to several important findings. Notably, Liao et al. \cite{MR2165440} provided an analytical expression for the least squares solution with the minimum norm in the real field. Ding et al. \cite{MR2646321} proposed two iterative algorithms to obtain solutions in the real field. Mitra \cite{MR0320028} delved into the necessary and sufficient conditions for the existence of solution and presented a general formulation for the solution in the complex field. Navarra et al. \cite{MR1826896} offered a simpler consistency condition and a new representation of the general common solution compared to Mitra \cite{MR0320028}. They also derived a representation of the general Hermitian solution for $AXB=C$, provided the Hermitian solution exists. Wang et al. \cite{MR3575756} determined the least squares Hermitian solution with the minimum norm in the complex field using matrix and vector products. Zhang et al. \cite{MR3860589} developed a more efficient method compared to \cite{MR3575756} for finding the least squares Hermitian solution with the minimum norm by utilizing the real representation of a complex matrix. Yuan et al. \cite{MR3434526} studied the least squares L-structured solutions of quaternion linear matrix equations using the complex representation of quaternion matrices. However, limited research has been conducted on solving matrix equations in the reduced biquaternion domain.
	
A reduced biquaternion, a type of $4D$ hypercomplex number, shares with quaternions the characteristic of possessing a real part alongside three imaginary components. However, it distinguishes itself by offering notable advantages over its quaternion counterpart. One significant distinction is the commutative nature of multiplication, which streamlines fundamental operations like multiplication itself, SVD, and eigenvalue calculations \cite{pei2008eigenvalues}. This results in algorithms that are less complex and more expeditious in computational procedures. For instance, Pei et al. \cite{pei2004commutative,pei2008eigenvalues} and Gai \cite{gai2023theory} demonstrated that reduced biquaternions outperform conventional quaternions for image and digital signal processing. Despite the advantage of reduced biquaternions over quaternions, limited research has been dedicated to solving reduced biquaternion matrix equations (RBMEs). Recently, Yuan et al. \cite{MR4137050} studied the necessary and sufficient conditions for the existence of a Hermitian solution of the RBME \eqref{eq1.1} and provided a general formulation for the solution  by using the method of complex representation (CR) of reduced biquaternion matrices. However, this method involves complex matrices and complex arithmetic, leading to extensive computations. 

In the existing literature, several papers have employed the real representation method to solve matrix equations within the complex and quaternion domains. For instance, in \cite{MR3860589}, the authors utilized real representation of complex matrices to address the complex matrix equation $(AXB, CXD) = (E, F)$. In \cite{MR3537293,MR3406901, MR4486494}, similar approaches were taken using real representation of quaternion matrices to tackle quaternion matrix equations. These studies clearly demonstrate the superior efficiency of the real representation (RR) method compared to the complex representation (CR) method when dealing with matrix equations in the complex and quaternion domains. However, the real representation method has not yet been explored in the context of solving reduced biquaternion matrix equations. Motivated by insights from prior research, this paper introduces a more efficient approach for finding the Hermitian solution to RBME \eqref{eq1.1}. Here are the highlights of the work presented in this paper: 
	\begin{itemize}
		\item 	We explore the least squares Hermitian solution with the minimum norm of the RBME \eqref{eq1.1} using the method of real representation  of reduced biquaternion matrices. The notable advantage of this method lies in its exclusive use of real matrices and real arithmetic operations, resulting in enhanced efficiency. 
	    \item We establish the necessary and sufficient conditions for the existence and uniqueness of the Hermitian solution to RBME \eqref{eq1.1} and provide a general form of the solution. 
	    \item  In light of complex matrix equations being a special case of reduced biquaternion matrix equations, we utilize our developed method to find the Hermitian solution for matrix equation \eqref{eq1.1} over the complex field.
	    \item 	We conduct a comparative analysis between the CR and RR methods to study the Hermitian solution of RBME \eqref{eq1.1}.
	   \end{itemize}
Furthermore, this paper explores the application of the proposed framework in solving inverse eigenvalue problems. Inverse eigenvalue problems often involve the reconstruction of structured matrices based on given spectral data. When the available spectral data contains only partial information about the eigenpairs, this problem is termed as a partially described inverse eigenvalue problem (PDIEP). In PDIEP, two crucial aspects come to the forefront: the theory of solvability and the development of numerical solution methodologies (as detailed in \cite{chu2005inverse} and related references). In terms of solvability, a significant challenge has been to establish the necessary or sufficient conditions for solvability of PDIEP. On the other hand, numerical solution methods aim to construct matrices in a numerically stable manner when the given spectral data is feasible. In this paper, by leveraging our developed framework, we successfully introduce a numerical solution methodology for PDIEP \cite[Problem 5.1]{chu2005inverse}, which requires the construction of the Hermitian matrix from an eigenpair set.

The manuscript is organized as follows. In Section \ref{sec2}, notation and preliminary results are presented. Section \ref{sec3} outlines a framework for solving constrained RBME. Section \ref{sec4} highlights the application of our developed framework to solve PDIEP. Section \ref{sec5} offers numerical verification of our developed findings.

\section{Notation and preliminaries}\label{sec2}
	\subsection{Notation}
		Throughout this paper, $\QR$ denotes the set of all reduced biquaternions. $\R^{m \times n}$, $\C^{m \times n}$, and $\QR^{m \times n}$ represent the sets of all $m \times n$ real, complex, and reduced biquaternion matrices, respectively. We also denote $\SR^{n \times n}$, $\ASR^{n \times n}$, $\HC^{n \times n}$, and $\HQR^{n \times n}$ as the sets of all $n \times n$ real symmetric , real anti-symmetric, complex Hermitian, and reduced biquaternion Hermitian matrices, respectively. For a diagonal matrix $A=(a_{ij}) \in \QR^{n \times n}$, we denote it as $\diag(\alpha_1,\alpha_2,\ldots,\alpha_n)$, where $a_{ij}=0$ whenever $i \neq j$ and $a_{ii}=\alpha_i$ for $i=1,\ldots,n$. For $A \in \C^{m \times n}$, the notations $A^{+}, A^T, \Re(A)$, and $\Im(A)$ stand for the Moore-Penrose generalized inverse, transpose, real part, and imaginary part of $A$, respectively. $I_{n}$ represents the identity matrix of order $n$. For $i=1, 2, \ldots, n$, $e_i$ denotes the $i^{th}$ column of the identity matrix $I_n$. $0$ denotes the zero matrix of suitable size. $A \otimes B= (a_{ij}B)$ represents the Kronecker product of matrices $A \; \mbox{and} \; B$. The symbol $\norm{\cdot}_F$ represents the Frobenius norm. $\norm{\cdot}_2$ represents the $2$-norm or Euclidean norm. For $A \in \QR^{m \times n_1}$ and $B \in \QR^{m \times n_2}$, the notation $\left[A, B\right]$ represents the matrix $\begin{bmatrix}A & B\end{bmatrix} \in \QR^{m \times (n_1 + n_2)}$.
		
			Matlab command $randn(m,n)$ creates an $m \times n$ codistributed matrix of normally distributed random numbers whose every element is between $0$ and $1$. $rand(m,n)$ returns an $m \times n$ matrix of uniformly distributed random numbers. $ones(m,n)$ and $zeros(m,n)$ returns an $m \times n$ matrix of ones and zeros, respectively. $toeplitz(1:n)$ creates a Toeplitz matrix whose first row and first column is $[1, 2, \ldots, n]$. $eye(n)$ generates an identity matrix of size $n \times n$. Let $A$ be any matrix of size $n \times n$. $triu(A)$ returns the upper triangular part of a matrix $A$, while setting all the elements below the main diagonal to zero. $triu(A,k)$ returns the elements on and above the $k^{th}$ diagonal of $A$, while setting all the elements below it as zero. We use the following abbreviations throughout this paper:\\
		RBME : reduced biquaternion matrix equation,  CME : complex matrix equation, CR : complex representation, RR : real representation.

	\subsection{Preliminaries}
	A reduced biquaternion can be uniquely expressed as $a= a_{0}+ a_{1}\,\i+ a_{2}\,\j+ a_{3}\,\k$, where $a_{i} \in \R$ for $i= 0, 1, 2, 3$, and $\i^2=  \k^2= -1, \; \j^2= 1$,
	$\i\j= \j\i= \k, \; \j\k= \k\j= \i, \; \k\i= \i\k= -\j$. The norm of $a$ is $\norm{a}= \sqrt{a_0^2+ a_1^2+ a_2^2+ a_3^2}$. The Frobenius norm for $A= (a_{ij}) \in \QR^{m \times n}$ is defined as follows:
	\begin{equation}\label{eq2.1}
	\norm{A}_F= \sqrt{\sum_{i= 1}^{m} \sum_{j= 1}^{n} \norm{a_{ij}}^{2}}.
\end{equation}
	Let $A=A_0+ A_1\i+ A_2\j+ A_3\k \in \QR^{m \times n}$, where $A_t \in \R^{m \times n}$ for $t=0,1,2,3$. The real representation of matrix $A$, denoted as $A^R$, is defined as follows:
	\begin{equation}\label{eq2.2}
		A^R= \begin{bmatrix}
			A_0  &  -A_1  &  A_2  &  -A_3 \\
			A_1   &   A_0  &  A_3  &   A_2  \\
			A_2   &  -A_3 & A_0  &  -A_1  \\
			A_3  &  A_2  &  A_1  &  A_0 
		\end{bmatrix}.
	\end{equation}
Let $A_r^R$ denote the first block row of the block matrix $A^R$, i.e., $A_r^R= 
	[A_0, -A_1, A_2, -A_3].$ We have
	\begin{equation}\label{eq2.3}
	\norm{A}_F= \frac{1}{2} \norm{A^R}_F = \norm{A_r^R}_F.
\end{equation}
For matrix $A= (a_{ij}) \in \QR^{m \times n}$, let $a_{j}=\left[a_{1j}, a_{2j}, \ldots, a_{mj}\right] \; \mbox{for} \; j= 1,  2, \ldots, n$. We have $\vec(A)= \left[a_1, a_2, \ldots, a_n\right]^T$. Clearly, the operator $\vec(A)$ is linear, which means that for $A$, $B \in \QR^{m \times n}$, and $\alpha \in \R$, we have
$\vec(A+B)= \vec(A)+ \vec(B) \,\mbox{and}\,\vec(\alpha A)= \alpha \vec(A).$ Also, we have
\begin{equation}	\label{eq2.4}
	\vec(A_r^R) = \begin{bmatrix}
		\vec(A_0)\\
		-\vec(A_1) \\
		\vec(A_2) \\
		-\vec(A_3)
	\end{bmatrix}.
\end{equation}
For $A \in \C^{m \times n}$, $B \in \C^{ n \times s}$,  and $C \in \C^{s \times t}$, it is well known that $\vec(ABC) = \left(C^T \otimes A\right)\vec(B)$.
Now, we present two key lemmas. The subsequent lemma can be easily deduced from the structures of $A^R$ and $A_r^R$.
\begin{lemma}\label{lem2.1}
	Let $A, B \in \QR^{m \times n}$, $C \in \QR^{n \times p}$, and $\alpha \in \R$. Then the following properties hold.
		\begin{enumerate}[label=\textup{(\roman*)}, noitemsep,nolistsep]
		\item $A=B \iff A^R=B^R \iff A_r^R=B_r^R$.
		\item $(A+B)^R = A^R + B^R$, $(\alpha A)^R=\alpha A^R$, $(AC)^R = A^RC^R$.
		\item $(A+B)_r^R=A_r^R+B_r^R$, $(\alpha A)_r^R = \alpha A_r^R$, $(AC)_r^R=A_r^R C^R$.
	\end{enumerate}
\end{lemma}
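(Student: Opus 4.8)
The plan is to verify all three parts directly from the definition of the real representation in \eqref{eq2.2}, treating the multiplicative identity $(AC)^R = A^R C^R$ as the only substantive computation and obtaining everything else as a consequence of it.

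For part (i), I would close the cycle of implications $A = B \Rightarrow A^R = B^R \Rightarrow A_r^R = B_r^R \Rightarrow A = B$. The first implication is immediate, since equal reduced biquaternion matrices have identical real components $A_t = B_t$ for $t = 0,1,2,3$; the second follows by simply extracting the first block row of $A^R$; and the last is the only one needing a remark: the identity $A_r^R = B_r^R$ reads blockwise as $[A_0, -A_1, A_2, -A_3] = [B_0, -B_1, B_2, -B_3]$, which forces $A_t = B_t$ for every $t$, and hence $A = B$.

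For part (ii), the additive law $(A+B)^R = A^R + B^R$ and the homogeneity $(\alpha A)^R = \alpha A^R$ follow at once from componentwise addition and scaling of reduced biquaternions, namely $(A+B)_t = A_t + B_t$ and $(\alpha A)_t = \alpha A_t$, together with the linearity of the block layout in \eqref{eq2.2}. The crux is $(AC)^R = A^R C^R$. Here I would first expand the product $AC$ using the multiplication table $\i^2 = \k^2 = -1$, $\j^2 = 1$, $\i\j = \k$, $\j\k = \i$, $\k\i = -\j$, collecting the coefficients of $1, \i, \j, \k$ to obtain the four real component matrices of $AC$; for instance $(AC)_0 = A_0 C_0 - A_1 C_1 + A_2 C_2 - A_3 C_3$, with analogous expressions for the other three. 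I would then form the block product $A^R C^R$ and compare it, block by block, against the real representation assembled from these components. The commutativity of reduced biquaternion multiplication enters the expansion and is what makes the blocks align. Part (iii) then follows immediately by restricting to the first block row: $(A+B)_r^R$ and $(\alpha A)_r^R$ are the first block rows of $(A+B)^R$ and $(\alpha A)^R$, while the first block row of $A^R C^R$ is precisely the first block row of $A^R$ multiplied by $C^R$, i.e.\ $A_r^R C^R$, giving $(AC)_r^R = A_r^R C^R$.

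The main obstacle is the multiplicative identity in part (ii): it is a bookkeeping exercise that requires careful tracking of the signs produced by $\i^2 = \k^2 = -1$ and by the mixed products such as $\k\i = -\j$, and then matching all sixteen blocks of $A^R C^R$ with the real representation of the expanded product $AC$. Because the template \eqref{eq2.2} was designed exactly so that this algebra closes, no real ingenuity is required beyond the verification itself, and parts (i) and (iii) are essentially immediate once the multiplicative law is in hand.
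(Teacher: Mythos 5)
Your proposal is correct and takes essentially the approach the paper itself intends: the paper gives no written proof, stating only that the lemma ``can be easily deduced from the structures of $A^R$ and $A_r^R$,'' and your blockwise verification is exactly that deduction carried out in detail. Your component formulas (e.g.\ $(AC)_0 = A_0C_0 - A_1C_1 + A_2C_2 - A_3C_3$) and the observation that the first block row of $A^RC^R$ equals $A_r^R C^R$ are both accurate, so parts (i)--(iii) close as you describe.
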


\begin{lemma}\label{lem2.2}
		Let $X=X_0+X_1\i+X_2\j+X_ 3\k \in \QR^{n \times n}$. Then $\vec(X^R) =\mathcal{J}\vec(X_r^R)$. We have
		\begin{equation}\label{eq2.5}
			\mathcal{J}=
	\begin{bmatrix}
		\mathcal{J}_0 & 	-\mathcal{J}_1 & 	\mathcal{J}_2 & 	-\mathcal{J}_3\\
		\mathcal{J}_1 & 	\mathcal{J}_0 & 	\mathcal{J}_3 & 	\mathcal{J}_2\\
		\mathcal{J}_2 & 	-\mathcal{J}_3 & 	\mathcal{J}_0 & 	-\mathcal{J}_1\\
	    \mathcal{J}_3 & 	\mathcal{J}_2 & 	\mathcal{J}_1 & 	\mathcal{J}_0
	\end{bmatrix},
\end{equation} 
 where  $\mathcal{J}_0=\begin{bmatrix}
	\mathcal{J}_{01} \\
	\mathcal{J}_{02} \\
	\vdots \\
	\mathcal{J}_{0n} \\
\end{bmatrix}$, 
$\mathcal{J}_1=\begin{bmatrix}
\mathcal{J}_{11} \\
\mathcal{J}_{12} \\
\vdots \\
\mathcal{J}_{1n} \\
\end{bmatrix}$,
$\mathcal{J}_2=\begin{bmatrix}
\mathcal{J}_{21} \\
\mathcal{J}_{22} \\
\vdots \\
\mathcal{J}_{2n} \\
\end{bmatrix}$,
$\mathcal{J}_3=\begin{bmatrix}
\mathcal{J}_{31} \\
\mathcal{J}_{32} \\
\vdots \\
\mathcal{J}_{3n} \\
\end{bmatrix}$. $\mathcal{J}_{ij}$ is a block matrix of size $4 \times n$ with $I_n$ at the $(i+1,j)^{th}$ position, and the rest of the entries are zero matrices of size $n$, where $i=0,1,2,3$ and $j=1,2,\ldots,n$.
\end{lemma}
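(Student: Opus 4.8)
The plan is to compute $\vec(X^R)$ directly by stacking the columns of $X^R$, and to check that the result coincides, block by block, with $\mathcal{J}\vec(X_r^R)$. The two facts I will lean on are the standard identity $\vec(AYB)=(B^T\otimes A)\vec(Y)$ recalled just before the lemma, and the explicit description of the blocks $\mathcal{J}_{tj}$.

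First I would record the action of a single block $\mathcal{J}_{tj}$. Since $\mathcal{J}_{tj}$ is the $4\times n$ block matrix (with $n\times n$ blocks) carrying $I_n$ in block position $(t+1,j)$, we may write $\mathcal{J}_{tj}=\big(e_{t+1}e_j^T\big)\otimes I_n$, where $e_{t+1}$ is the $(t+1)$-th standard basis vector of $\R^4$ and $e_j$ the $j$-th column of $I_n$. For any $Y\in\R^{n\times n}$ the mixed-product rule then gives $\mathcal{J}_{tj}\vec(Y)=(e_{t+1}\otimes I_n)\big[(e_j^T\otimes I_n)\vec(Y)\big]=(e_{t+1}\otimes I_n)\,Ye_j$; that is, $\mathcal{J}_{tj}\vec(Y)$ is the length-$4n$ vector whose $(t+1)$-th block equals the $j$-th column $Ye_j$ of $Y$ and whose other three blocks vanish. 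Stacking over $j=1,\dots,n$ shows that $\mathcal{J}_t\vec(Y)$ is exactly $\vec$ of the $4n\times n$ matrix having $Y$ in its $(t+1)$-th block row and zeros elsewhere; this is the step that converts the column-major vectorization of a block column into the stacked form $\mathcal{J}_t=[\mathcal{J}_{t1};\dots;\mathcal{J}_{tn}]$.

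Next I would partition $X^R$ into its four block columns and vectorize each, so that $\vec(X^R)$ is the concatenation $[\,\vec(\mbox{block col }1);\dots;\vec(\mbox{block col }4)\,]$. The first block column $[X_0;X_1;X_2;X_3]$ has $j$-th column $[X_0e_j;X_1e_j;X_2e_j;X_3e_j]$, so by the placement property above $\vec(\mbox{block col }1)=\mathcal{J}_0\vec(X_0)+\mathcal{J}_1\vec(X_1)+\mathcal{J}_2\vec(X_2)+\mathcal{J}_3\vec(X_3)$. On the other side, using $\vec(X_r^R)=[\vec(X_0);-\vec(X_1);\vec(X_2);-\vec(X_3)]$ from \eqref{eq2.4}, the first block row $[\mathcal{J}_0,-\mathcal{J}_1,\mathcal{J}_2,-\mathcal{J}_3]$ of $\mathcal{J}$ produces $\mathcal{J}_0\vec(X_0)+\mathcal{J}_1\vec(X_1)+\mathcal{J}_2\vec(X_2)+\mathcal{J}_3\vec(X_3)$ as well, the two sign flips cancelling. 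Repeating this for block columns $2,3,4$ and matching against the remaining block rows of $\mathcal{J}$ completes the verification, and hence $\vec(X^R)=\mathcal{J}\vec(X_r^R)$.

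The computation is routine linear algebra; the only place demanding care is the sign bookkeeping. The minus signs in $\vec(X_r^R)$ sitting on the $X_1$- and $X_3$-components interact with the alternating signs in each block row of $\mathcal{J}$, and one must confirm that in every one of the four block columns these two sign patterns combine to reproduce exactly the signs dictated by the definition \eqref{eq2.2} of $X^R$. A secondary subtlety is ensuring that the column-major interleaving inside each block column is matched by the internal stacking $\mathcal{J}_t=[\mathcal{J}_{t1};\dots;\mathcal{J}_{tn}]$ rather than by a blockwise grouping of the $X_t$; this is precisely what the first step above pins down.
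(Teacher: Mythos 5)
Your proof is correct: the identification $\mathcal{J}_{tj}=(e_{t+1}e_j^T)\otimes I_n$, the resulting placement property of $\mathcal{J}_t\vec(Y)$, and the sign cancellations between $\vec(X_r^R)$ and the alternating block rows of $\mathcal{J}$ all check out against the definition \eqref{eq2.2} of $X^R$. The paper omits the proof entirely, remarking only that the lemma ``can be easily derived through direct computation,'' and your argument is exactly that direct computation carried out in full, so it matches the paper's intended approach.
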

The above lemma can be easily derived through direct computation; thus, we omit the proof. To enhance our understanding of the above lemma, we will examine it in the context of $n = 2$. In this scenario, we have

$\mathcal{J}_0=\begin{bmatrix}
	\mathcal{J}_{01} \\
	\mathcal{J}_{02}
\end{bmatrix}=\begin{bmatrix}
		I_2       &   0        \\
		0          &   0         \\
		0          &    0         \\
		0          &    0          \\
	 	\hdashline
	 	0          &  I_2         \\
		0          &   0            \\
	    0          &    0           \\
		0          &    0           
	\end{bmatrix}$, 
	$\mathcal{J}_1=\begin{bmatrix}
		\mathcal{J}_{11} \\
		\mathcal{J}_{12}
	\end{bmatrix}=\begin{bmatrix}
			0       &   0           \\
			I_2          &   0      \\
			0          &    0         \\
			0          &    0          \\
				\hdashline
			0          &  0            \\
			0          &   I_2         \\
			0          &    0           \\
			0          &    0            
		\end{bmatrix}$, 
		$\mathcal{J}_2=\begin{bmatrix}
			\mathcal{J}_{21} \\
			\mathcal{J}_{22}
		\end{bmatrix}=\begin{bmatrix}
				0       &   0              \\
				0         &   0             \\
				I_2       &    0            \\
				0          &    0            \\
					\hdashline
				0          &  0             \\
				0          &   0             \\
				0          &    I_2           \\
				0          &    0            
			\end{bmatrix}$, 
			$\mathcal{J}_3=\begin{bmatrix}
				\mathcal{J}_{31} \\
				\mathcal{J}_{32}
			\end{bmatrix}=\begin{bmatrix}
					0       &   0            \\
					0         &   0           \\
					0       &    0            \\
				   I_2          &    0        \\
						\hdashline
					0          &  0            \\
					0          &   0            \\
					0          &    0            \\
					0          &   I_2         
				\end{bmatrix}.$\\
			
Now, we recall some fundamental results that are pivotal in establishing the main findings of this paper.
\begin{definition}
	For $X=(x_{ij}) \in \R^{n \times n}$, let $\alpha_1=\left[x_{11}, x_{21}, \ldots, x_{n1}\right]$, $\alpha_2=\left[x_{22},x_{32},\ldots,x_{n2}\right]$, $\ldots$, $\alpha_{n-1}=\left[x_{(n-1)(n-1)},x_{n(n-1)}\right]$, $\alpha_n = x_{nn}$, and denote by $\vec_S(X)$ the following vector:
	\begin{equation}\label{eqs}
      \vec_S(X)=\left[\alpha_1,\alpha_2,\ldots,\alpha_{n-1},\alpha_n\right]^T \in \R^{\frac{n(n+1)}{2}}.
	\end{equation}
\end{definition}

\begin{definition}
	For $X=(x_{ij}) \in \R^{n \times n}$, let $\beta_1=\left[x_{21},x_{31}, \ldots, x_{n1}\right] $, $\beta_2=\left[x_{32},x_{42}, \ldots,x_{n2}\right]$, $\ldots$, $\beta_{n-2}=\left[x_{(n-1)(n-2)},x_{n(n-2)}\right]$, $\beta_{n-1} = x_{n(n-1)}$, and denote by $\vec_A(X)$ the following vector:
	\begin{equation}\label{eqa}
		\vec_A(X)=\left[\beta_1,\beta_2,\ldots,\beta_{n-2},\beta_{n-1}\right]^T \in \R^{\frac{n(n-1)}{2}}.
	\end{equation}
\end{definition}
\begin{lemma}\label{lem2.5}\cite{MR3760735}
	If $X \in \R^{n \times n}$, then
	\begin{enumerate}[label=\textup{(\roman*)}, noitemsep,nolistsep]
		\item $X \in \SR^{n \times n} \iff \vec(X) = K_S\vec_S(X)$, where $\vec_S(X)$ is of the form \eqref{eqs}, and the matrix $K_S \in \R^{n^2 \times \frac{n(n+1)}{2}}$ is represented as
		\begin{equation*}
	K_S=	\begin{blockarray}{ccccccccccccccc}
			\begin{block}{[ccccccccccccccc]}
			e_{1}&e_{2}&e_{3}&\cdots&e_{n-1}&e_{n}&0&0 &\cdots&0& 0&\cdots& 0&0& 0 \\
			0&e_{1} &0&\cdots &0 &0&e_{2}& e_{3}& \cdots&e_{n-1}&e_{n}&\cdots&0&0&0 \\
			0&0&e_{1}&\cdots&0&0&0&e_{2}& \cdots &0&0& \cdots& 0 &  0   &  0   \\
	        \vdots &\vdots&\vdots& &\vdots &\vdots &\vdots& \vdots&&\vdots&\vdots&             &\vdots &\vdots       &\vdots \\
	        0&0&0&\cdots&e_{1}&0&0&0&\cdots&e_{2}&0&\cdots&e_{n-1}&e_{n}&0\\
	        0&0&0&\cdots&0&e_{1}&0&0&\cdots&0&e_{2}&\cdots&0&e_{n-1}&e_{n}\\
	        \end{block}
		\end{blockarray}.
	\end{equation*}
\item $X \in \ASR^{n \times n} \iff \vec(X) = K_A\vec_A(X)$, where $\vec_A(X)$ is of the form \eqref{eqa}, and the matrix $K_A \in \R^{n^2 \times \frac{n(n-1)}{2}}$ is represented as
\begin{equation*}
	K_A=	\begin{blockarray}{ccccccccccc}
		\begin{block}{[ccccccccccc]}
			e_{2} & e_{3} & \cdots & e_{n-1}  & e_{n} & 0 & \cdots & 0 & 0 & \cdots & 0 \\
			-e_{1} & 0 & \cdots & 0 & 0 &e_{3} & \cdots & e_{n-1} & e_{n} & \cdots & 0 \\
			0 & -e_{1} & \cdots & 0 & 0 & -e_{2} & \cdots&0 & 0 & \cdots & 0 \\
			\vdots & \vdots & & \vdots & \vdots & \vdots & & \vdots & \vdots & & \vdots\\
			0 & 0 & \cdots & -e_{1} & 0 & 0 & \cdots & -e_{2} & 0 & \cdots & e_{n} \\
			0 & 0& \cdots & 0 & -e_{1} & 0 & \cdots & 0 & -e_{2} & \cdots & -e_{n-1} \\
		\end{block}
	\end{blockarray}.
\end{equation*}
	\end{enumerate}
\end{lemma}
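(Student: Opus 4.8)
The plan is to derive both parts from a single structural identity and then read off the two equivalences. For part (i), I would first observe that the right-hand side $K_S\,\vec_S(X)$ depends on $X$ only through its lower-triangular part (the entries $x_{ik}$ with $i \ge k$, which are exactly the components collected in $\vec_S(X)$). Denote by $\widehat{X}$ the symmetric matrix whose lower triangle including the diagonal coincides with that of $X$, i.e. $\widehat{x}_{ij}=x_{ij}$ for $i\ge j$ and $\widehat{x}_{ij}=x_{ji}$ for $i<j$. The heart of the argument is the claim
\begin{equation*}
K_S\,\vec_S(X)=\vec(\widehat{X})\qquad\text{for every }X\in\R^{n\times n}.
\end{equation*}
Granting this, part (i) follows at once: if $X\in\SR^{n\times n}$ then $\widehat{X}=X$, so $\vec(X)=K_S\,\vec_S(X)$; conversely, $\vec(X)=K_S\,\vec_S(X)=\vec(\widehat{X})$ forces $X=\widehat{X}$, and $\widehat{X}$ is symmetric by construction.

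To establish the structural identity I would verify it block row by block row. Partition $\vec(X)$ into $n$ blocks of length $n$, the $j$-th block being the $j$-th column $[x_{1j},\dots,x_{nj}]^T$ of $X$, and group the columns of $K_S$ according to the segments $\alpha_1,\dots,\alpha_n$ of $\vec_S(X)$, where $\alpha_k=[x_{kk},x_{(k+1)k},\dots,x_{nk}]$ occupies $n-k+1$ consecutive columns. Reading off the $j$-th block row of $K_S$, one checks that it contributes: (a) from the segment $\alpha_j$, the basis vectors $e_j,e_{j+1},\dots,e_n$, which deposit $x_{jj},x_{(j+1)j},\dots,x_{nj}$ into positions $j,\dots,n$ of the output (the on- and below-diagonal entries of column $j$); and (b) from each earlier segment $\alpha_i$ with $i<j$, a single vector $e_i$ sitting in the $(j-i+1)$-th column of that segment, which deposits $x_{ji}$ into position $i$ of the output. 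Since position $i$ of the $j$-th column of $\widehat{X}$ is precisely $\widehat{x}_{ij}=x_{ji}$ for $i<j$, these contributions assemble exactly the $j$-th column of $\widehat{X}$, while the segments $\alpha_i$ with $i>j$ contribute nothing in this block row. Matching the index offsets in (a) and (b) against the placement of the $e_i$-blocks in the displayed form of $K_S$ completes the verification.

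For part (ii) I would run the identical argument with $\widehat{X}$ replaced by the anti-symmetric completion of the strict lower triangle of $X$, namely $\widehat{x}_{ij}=x_{ij}$ for $i>j$, $\widehat{x}_{ij}=-x_{ji}$ for $i<j$, and $\widehat{x}_{ii}=0$. The strictly-below-diagonal entries are now the ones collected in $\vec_A(X)$ via the segments $\beta_1,\dots,\beta_{n-1}$; the vanishing diagonal explains the absence of any diagonal contribution, and the reflection carrying a minus sign accounts for the $-e_i$ blocks appearing in $K_A$. The two equivalences then follow exactly as before, since $\vec(X)=K_A\,\vec_A(X)=\vec(\widehat{X})$ holds iff $X=\widehat{X}$, i.e. iff $X$ is anti-symmetric.

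I expect the only real obstacle to be the index bookkeeping in the second paragraph: one must pin down where the segment $\alpha_i$ begins inside $\vec_S(X)$ and confirm that the lone $e_i$ in the $j$-th block row indeed lands in its $(j-i+1)$-th column, and then track the analogous sign-and-offset pattern for $\vec_A$ and $K_A$. Once the column-grouping convention for $\vec_S$ (resp. $\vec_A$) is fixed, both checks are routine and purely combinatorial; a small case such as $n=2$ or $n=3$ makes the pattern transparent and serves as a sanity check before writing the general argument.
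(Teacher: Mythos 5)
The paper offers no proof of this lemma to compare against: it is recalled verbatim from the cited reference \cite{MR3760735}, so your argument is filling a gap rather than paralleling an in-paper derivation. That said, your proof is correct. The structural identity $K_S\,\vec_S(X)=\vec(\widehat{X})$, with $\widehat{X}$ the symmetric completion of the lower triangle of $X$, is exactly what the block pattern of $K_S$ encodes: in block row $j$, the segment of columns belonging to $\alpha_j$ carries $e_j,e_{j+1},\dots,e_n$ (depositing $x_{jj},\dots,x_{nj}$ into rows $j,\dots,n$ of column $j$ of the output), each earlier segment $\alpha_i$ with $i<j$ carries the single block $e_i$ in its $(j-i+1)$-st column (depositing $x_{ji}$ into row $i$, which is precisely $\widehat{x}_{ij}$), and later segments contribute nothing; since the map $X\mapsto\vec(X)$ is a bijective rearrangement of entries, $\vec(X)=\vec(\widehat{X})$ forces $X=\widehat{X}$, and both directions of (i) follow. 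The anti-symmetric case runs verbatim with the sign flip and vanishing diagonal, which is consistent with the $-e_i$ blocks and the absence of any diagonal contribution in $K_A$; the only offset that changes is that $x_{ji}$ sits in position $j-i$ of $\beta_i$ rather than $j-i+1$, a detail your writeup implicitly accommodates. Your suggestion to verify the pattern at $n=2$ or $n=3$ before the general index argument is a sensible safeguard, and nothing in the bookkeeping fails in general.
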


For any $X=X_0+X_1\i+X_2\j+X_3\k \in \HQR^{n \times n} $, it is easy to see that
\begin{equation*}
	X \in \HQR^{n \times n} \iff X_0^T=X_0, \; X_1^T=-X_1, \;  X_2^T=-X_2, \; X_3^T=-X_3.
\end{equation*}
Set 
\begin{equation}\label{eq2.6}
	\mathcal{Q}=\begin{bmatrix}
		K_S   &  0    &  0     &  0\\
		0       &-K_A  &0       &  0\\
		0       & 0      & K_A  &  0\\
		0       & 0      & 0       &-K_A
	\end{bmatrix}.
\end{equation}
\begin{lemma}\label{lem2.6}
	If $X=X_0+X_1\i+X_2\j+X_3\k \in \QR^{n \times n}$, then $$X \in \HQR^{n \times n} \iff \vec(X_r^R)= \mathcal{Q}
\begin{bmatrix}
	\vec_S(X_0) \\
	\vec_A(X_1) \\
	\vec_A(X_2) \\
	\vec_A(X_3) 
\end{bmatrix}.$$
\end{lemma}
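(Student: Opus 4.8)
The plan is to prove both directions of the equivalence simultaneously by reducing everything to the four real blocks $X_0, X_1, X_2, X_3$ and invoking the symmetry characterizations of Lemma~\ref{lem2.5} blockwise. The starting point is the characterization stated immediately before this lemma: $X \in \HQR^{n \times n}$ holds exactly when $X_0 \in \SR^{n \times n}$ together with $X_1, X_2, X_3 \in \ASR^{n \times n}$. This recasts the reduced biquaternion Hermitian condition as one symmetry requirement and three anti-symmetry requirements on real matrices.

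First I would expand the left-hand side using \eqref{eq2.4}, which gives
\begin{equation*}
\vec(X_r^R) = \begin{bmatrix} \vec(X_0) \\ -\vec(X_1) \\ \vec(X_2) \\ -\vec(X_3) \end{bmatrix}.
\end{equation*}
Next I would expand the right-hand side using the block-diagonal structure \eqref{eq2.6} of $\mathcal{Q}$, obtaining
\begin{equation*}
\mathcal{Q}\begin{bmatrix} \vec_S(X_0) \\ \vec_A(X_1) \\ \vec_A(X_2) \\ \vec_A(X_3) \end{bmatrix} = \begin{bmatrix} K_S \vec_S(X_0) \\ -K_A \vec_A(X_1) \\ K_A \vec_A(X_2) \\ -K_A \vec_A(X_3) \end{bmatrix}.
\end{equation*}
Comparing these two stacked vectors block by block, the claimed identity is equivalent to the simultaneous system $\vec(X_0) = K_S \vec_S(X_0)$, $\vec(X_1) = K_A \vec_A(X_1)$, $\vec(X_2) = K_A \vec_A(X_2)$, and $\vec(X_3) = K_A \vec_A(X_3)$, where in the second and fourth blocks the outer $-1$ factors cancel on both sides. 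By the two equivalences of Lemma~\ref{lem2.5}, these four identities hold precisely when $X_0$ is symmetric and $X_1, X_2, X_3$ are anti-symmetric, which is exactly the Hermitian condition recorded above. Since every step is an if-and-only-if, this establishes the stated equivalence in one stroke and simultaneously in both directions.

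The only delicate point is the sign bookkeeping. The factors $-1$ in the second and fourth blocks of \eqref{eq2.4} must be matched against the $-K_A$ blocks of $\mathcal{Q}$ so that they cancel and leave the clean form $\vec(X_t) = K_A \vec_A(X_t)$ demanded by Lemma~\ref{lem2.5}; in the third block no sign appears on either side, and in the first block the symmetric characterization applies directly. This matching is the main (and essentially only) obstacle, but it is purely mechanical, and once the signs are verified the proof is complete. It is precisely to make this cancellation work that $\mathcal{Q}$ was defined with the alternating pattern $K_S, -K_A, K_A, -K_A$ on its diagonal, mirroring the sign pattern of $X_r^R$.
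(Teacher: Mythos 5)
Your proof is correct and takes essentially the same route as the paper's: reduce the Hermitian condition to blockwise (anti-)symmetry of $X_0, X_1, X_2, X_3$, apply Lemma \ref{lem2.5} to each block, and match the result against \eqref{eq2.4} and \eqref{eq2.6}. The only difference is presentational—you spell out the block-by-block sign cancellations and note explicitly that every step is an equivalence, whereas the paper states this more tersely.
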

\begin{proof}
For $X \in \HQR^{n \times n}$, we have $ X_0 \in \SR^{n \times n}$ and $X_1$, $X_2$, $X_3 \in \ASR^{n \times n}$. By applying Lemma \ref{lem2.5}, we obtain $\vec(X_0) = K_S\vec_S(X_0)$, $\vec(X_1) = K_A\vec_A(X_1)$, $\vec(X_2) = K_A\vec_A(X_2)$, and $\vec(X_3) = K_A\vec_A(X_3)$. By utilizing \eqref{eq2.4} and \eqref{eq2.6}, we achieve the desired result.
\end{proof}
Additionally, we need the following lemma for developing the main results.
		\begin{lemma}\label{lem2.7} \cite{golub2013matrix} Consider the matrix equation of the form $Ax=b$, where $A \in \R^{m \times n}$ and $b \in \R^{m}$. The following results hold:
		\begin{enumerate}[label=\textup{(\roman*)}, noitemsep,nolistsep]
			\item The matrix equation has a solution $x$ if and only if 
			$AA^+b = b$. In this case, the general solution is $x= A^+b+\left(I-A^+A\right)y,$ where $y \in \R^n$ is an arbitrary vector. Furthermore, if the consistency condition is satisfied, then the matrix equation has a unique solution if and only if $\rank(A) = n$. In this case, the unique solution is $x= A^+b$.
			\item The least squares solutions of the matrix equation can be expressed as
			$x= A^+b+\left(I-A^+A\right)y,$ where $y \in \R^n$ is an arbitrary vector, and the least squares solution with the least norm is $x=A^+b $.
		\end{enumerate}
	\end{lemma}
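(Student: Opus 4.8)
The plan is to base everything on the four Moore--Penrose identities $AA^+A=A$, $A^+AA^+=A^+$, $(AA^+)^T=AA^+$, and $(A^+A)^T=A^+A$, which guarantee that $P:=AA^+$ and $Q:=A^+A$ are the \emph{orthogonal} projectors onto $\mathrm{range}(A)$ and $\mathrm{range}(A^T)=\mathrm{null}(A)^{\perp}$, respectively. First I would record the two geometric facts used repeatedly: $I-Q$ is the orthogonal projector onto $\mathrm{null}(A)$, so that $\{(I-A^+A)y:y\in\R^{n}\}=\mathrm{null}(A)$; and $\mathrm{range}(A^+)=\mathrm{range}(A^T)=\mathrm{null}(A)^{\perp}$, so the columns of $A^+$ are orthogonal to $\mathrm{null}(A)$.

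For part (i), consistency: $Ax=b$ has a solution iff $b\in\mathrm{range}(A)$, and since $P$ is the orthogonal projector onto $\mathrm{range}(A)$ this is equivalent to $Pb=b$, i.e.\ $AA^+b=b$. Under this condition $A^+b$ is a particular solution, because $A(A^+b)=AA^+b=b$. Adding the homogeneous solution set $\mathrm{null}(A)=\{(I-A^+A)y:y\in\R^{n}\}$ gives the general solution $x=A^+b+(I-A^+A)y$. Uniqueness holds iff $\mathrm{null}(A)=\{0\}$, i.e.\ iff $I-A^+A=0$, which is equivalent to $A$ having full column rank $\rank(A)=n$; in that case the unique solution is $x=A^+b$.

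For part (ii), I would split $b$ orthogonally as $b=Pb+(I-P)b$ and note that for every $x$ the vector $Ax-Pb$ lies in $\mathrm{range}(A)$ while $(I-P)b$ is orthogonal to it, so by the Pythagorean identity $\norm{Ax-b}_2^2=\norm{Ax-Pb}_2^2+\norm{(I-P)b}_2^2$. The second term is independent of $x$, so minimizing $\norm{Ax-b}_2$ is equivalent to solving the consistent system $Ax=Pb=AA^+b$. Applying part (i) to this system, with particular solution $A^+(AA^+b)=A^+b$, yields the least squares solution set $x=A^+b+(I-A^+A)y$.

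Finally, for the minimum-norm claim, among all least squares solutions $x=A^+b+(I-A^+A)y$ I would use that $A^+b\in\mathrm{range}(A^+)=\mathrm{null}(A)^{\perp}$ while $(I-A^+A)y\in\mathrm{null}(A)$, so the two summands are orthogonal and $\norm{x}_2^2=\norm{A^+b}_2^2+\norm{(I-A^+A)y}_2^2\ge\norm{A^+b}_2^2$, with equality precisely when $(I-A^+A)y=0$. Hence the least squares solution of least norm is $x=A^+b$. The main obstacle is entirely in setting up the orthogonal decompositions correctly: every step hinges on the self-adjointness of the projectors $AA^+$ and $A^+A$, which is exactly where the two symmetry Penrose conditions (and not merely $AA^+A=A$) are indispensable; once those are secured, each step reduces to a short Pythagorean computation.
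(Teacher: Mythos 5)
Your proposal is correct, but there is nothing in the paper to compare it against: Lemma \ref{lem2.7} is quoted as a known result from the cited reference (Golub and Van Loan) and the paper supplies no proof of it. Your argument --- identifying $AA^+$ and $A^+A$ as the orthogonal projectors onto $\mathrm{range}(A)$ and $\mathrm{null}(A)^{\perp}$, reducing the least squares problem to the consistent system $Ax=AA^+b$ via the Pythagorean identity, and obtaining the minimum-norm claim from the orthogonality of $A^+b$ and $(I-A^+A)y$ --- is the standard textbook derivation, and each step (consistency criterion, general solution, uniqueness iff $\rank(A)=n$, least squares set, least-norm solution) is justified soundly, so it stands as a complete self-contained proof of the lemma.
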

\section{Framework for solving constrained RBME}\label{sec3}

In this section, we initially investigate the least squares Hermitian solution with the least norm of RBME \eqref{eq1.1} using the RR method. The problem is formulated as follows:
\begin{problem} \label{pb1}
	Given matrices $A,C \in \QR^{m \times n}$, $B,D \in \QR^{n \times s}$, and $E,F \in \QR^{m \times s}$, let
	\begin{equation*}
		\mathcal{H}_{LE}= \left\{ X \; | \; X \in \HQR^{n \times n}, \;  \norm{\left(AXB-E, CXD-F\right)}_F= \min_{\widetilde{X} \in \HQR^{n \times n}} \norm{\left(A\widetilde{X}B-E, C\widetilde{X}D-F\right)}_F \right\}.
	\end{equation*}
	Then, find $X_H \in \mathcal{H}_{LE}$ such that
	$$\norm{X_H}_F= \underset{X \in \mathcal{H}_{LE}}{\min}\norm{X}_F.$$
\end{problem}
Next, by utilizing the RR method, we derive the necessary and sufficient conditions for the existence of the Hermitian solution for RBME \eqref{eq1.1}, along with a general formulation for the solution if the consistency criterion is met. We also establish the conditions for a unique solution and, in such cases, provide a general form of the solution. The RR method involves transforming the constrained RBME \eqref{eq1.1} into an equivalent unconstrained real linear system. 

Following that, we present a concise overview of the CR method, which is employed to find the Hermitian solution for RBME  \eqref{eq1.1} as documented in \cite{MR4137050}. Additionally, we conduct a comparative analysis between the RR and CR methods. 

Before proceeding, we introduce some notations that will be used in the subsequent results. Let
\begin{equation}\label{eq3.1}
\mathcal{P} = 	\begin{bmatrix}
		(B^R)^T \otimes A_r^R \\
		(D^R)^T \otimes C_r^R
	\end{bmatrix}, \; \;  \mathcal{R}=\diag(K_S, K_A, K_A, K_A).
\end{equation}
\begin{theorem}\label{thm1}
Given matrices $A,C \in \QR^{m \times n}$, $B,D \in \QR^{n \times s}$, and $E,F \in \QR^{m \times s}$, let $X=X_0+X_1\i+X_2\j+X_3\k  \in \QR^{n \times n}$. Let $\mathcal{P}$ and $\mathcal{R}$ be of the form \eqref{eq3.1}, and let $\mathcal{J}$ and $\mathcal{Q}$ be of the form \eqref{eq2.5} and \eqref{eq2.6}, respectively. Then, the set $\mathcal{H}_{LE}$ of Problem \ref{pb1} can be expressed as
 \begin{equation}\label{eq3.2}
 	\mathcal{H}_{LE}=\left\{X \; \left| \; \begin{bmatrix}
 			\vec(X_0) \\
 			\vec(X_1)  \\
 			\vec(X_2) \\
 			\vec(X_3)
 		\end{bmatrix}\right.=\mathcal{R}\left(\mathcal{P}\mathcal{J}\mathcal{Q}\right)^+
 	\begin{bmatrix}
 		\vec(E_r^R) \\
 		\vec(F_r^R)
 	\end{bmatrix} + \mathcal{R}\left(I_{(2n^2-n)}-\left(\mathcal{P}\mathcal{J}\mathcal{Q}\right)^+\left(\mathcal{P}\mathcal{J}\mathcal{Q}\right)\right)y
 	\right\},
 \end{equation}
	where $y \in \R^{(2n^2-n)}$ is an arbitrary vector. Furthermore, the unique solution $X_{H} \in \mathcal{H}_{LE}$ to Problem \ref{pb1} satisfies
\begin{equation}\label{eq3.3}
	 \begin{bmatrix}
		\vec(X_0) \\
		\vec(X_1)  \\
		\vec(X_2) \\
		\vec(X_3)
	\end{bmatrix}= \mathcal{R}\left(\mathcal{P}\mathcal{J}\mathcal{Q}\right)^+	\begin{bmatrix}
	\vec(E_r^R) \\
	\vec(F_r^R)
\end{bmatrix}. 
\end{equation}	
\end{theorem}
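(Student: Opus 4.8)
The plan is to convert Problem \ref{pb1}, a constrained least squares problem over $\HQR^{n\times n}$, into an equivalent \emph{unconstrained} real least squares problem in a single real parameter vector, and then to read off both the solution set \eqref{eq3.2} and the distinguished minimum-norm solution \eqref{eq3.3} from Lemma \ref{lem2.7}. The whole argument is a transport of the objective through the dictionary supplied by Lemmas \ref{lem2.1}, \ref{lem2.2}, \ref{lem2.5}, and \ref{lem2.6}, so the effort goes into checking that each reduction is an exact identity of norms.

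First I would rewrite the objective. The block Frobenius norm splits as $\norm{(M,N)}_F^2=\norm{M}_F^2+\norm{N}_F^2$, and by \eqref{eq2.3} one has $\norm{M}_F=\norm{M_r^R}_F=\norm{\vec(M_r^R)}_2$, so the residual equals the Euclidean norm of the stacked vector with blocks $\vec((AXB-E)_r^R)$ and $\vec((CXD-F)_r^R)$. Lemma \ref{lem2.1}(iii) together with (ii) gives $(AXB)_r^R=A_r^R X^R B^R$, whence $(AXB-E)_r^R=A_r^R X^R B^R-E_r^R$, and likewise for the second block. Vectorizing via the identity $\vec(ABC)=(C^{T}\otimes A)\vec(B)$ recorded above turns these into $((B^R)^T\otimes A_r^R)\vec(X^R)-\vec(E_r^R)$ and $((D^R)^T\otimes C_r^R)\vec(X^R)-\vec(F_r^R)$; stacking produces $\mathcal{P}\vec(X^R)-b$, with $\mathcal{P}$ as in \eqref{eq3.1} and $b=[\vec(E_r^R)^T,\vec(F_r^R)^T]^T$.

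Next I would substitute the two structural identities. Lemma \ref{lem2.2} replaces $\vec(X^R)$ by $\mathcal{J}\vec(X_r^R)$, and Lemma \ref{lem2.6} encodes the Hermitian constraint as $\vec(X_r^R)=\mathcal{Q}z$ with $z=[\vec_S(X_0)^T,\vec_A(X_1)^T,\vec_A(X_2)^T,\vec_A(X_3)^T]^T\in\R^{2n^2-n}$. Thus, as $X$ ranges over $\HQR^{n\times n}$, the objective becomes exactly $\norm{\mathcal{P}\mathcal{J}\mathcal{Q}\,z-b}_2$, an unconstrained real least squares problem. Lemma \ref{lem2.7}(ii) then gives the full minimizer set $z=(\mathcal{P}\mathcal{J}\mathcal{Q})^+b+(I-(\mathcal{P}\mathcal{J}\mathcal{Q})^+(\mathcal{P}\mathcal{J}\mathcal{Q}))y$ and the least-Euclidean-norm minimizer $z=(\mathcal{P}\mathcal{J}\mathcal{Q})^+b$. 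To return to $X$, I would invoke Lemma \ref{lem2.5}, which under the Hermitian structure yields $[\vec(X_0)^T,\vec(X_1)^T,\vec(X_2)^T,\vec(X_3)^T]^T=\mathcal{R}z$; inserting the two expressions for $z$ produces \eqref{eq3.2} and \eqref{eq3.3}.

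The step requiring the most care is the transfer of the minimum-norm selection. Because the residual reduction is an exact equality of norms, the characterization \eqref{eq3.2} of $\mathcal{H}_{LE}$ is immediate. The delicate point is that \eqref{eq3.3} selects the minimizer of $\norm{z}_2$, whereas Problem \ref{pb1} asks to minimize $\norm{X}_F$. Using \eqref{eq2.3}--\eqref{eq2.4} one has $\norm{X}_F=\norm{\vec(X_r^R)}_2=\norm{\mathcal{Q}z}_2$, so I would need to verify that, over the affine set of least squares minimizers, the minimizer of $\norm{\mathcal{Q}z}_2$ is attained at $z=(\mathcal{P}\mathcal{J}\mathcal{Q})^+b$; this is where the argument must exploit the interaction between the column structure of $\mathcal{Q}$ and the range/kernel splitting induced by the pseudoinverse, and it is the part I would check most carefully before asserting that \eqref{eq3.3} is the genuine minimum-Frobenius-norm Hermitian solution. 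Uniqueness of $X_H$ would then follow from the uniqueness of the distinguished least squares minimizer together with the injectivity of $z\mapsto X$ guaranteed by Lemma \ref{lem2.1}(i).
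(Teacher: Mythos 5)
Your reduction is, step for step, the paper's own proof: the same norm identities, the same use of Lemmas \ref{lem2.1}, \ref{lem2.2}, \ref{lem2.5}, \ref{lem2.6} to reach the unconstrained system \eqref{pr1}, the same application of Lemma \ref{lem2.7}(ii), and the same return map $z\mapsto\mathcal{R}z$. In particular your derivation of \eqref{eq3.2} is complete and matches the paper exactly. The difference lies entirely in your last paragraph: the ``delicate point'' you flag --- that Lemma \ref{lem2.7} selects the minimizer of $\|z\|_2$ while Problem \ref{pb1} asks for the minimizer of $\|X\|_F=\|\mathcal{Q}z\|_2$ --- is a step the paper performs silently, with no argument connecting the two norms. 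So you have not merely reproduced the proof; you have put your finger on a hole in it.

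The verification you postponed cannot in fact be carried out with the paper's (unnormalized) $K_S,K_A$. One has $\|X\|_F^2=\|\mathcal{Q}z\|_2^2=z^T\mathcal{Q}^T\mathcal{Q}z$, and $\mathcal{Q}^T\mathcal{Q}=\diag(K_S^TK_S,K_A^TK_A,K_A^TK_A,K_A^TK_A)$ is diagonal with entry $1$ on the coordinates of $z$ carrying diagonal entries of $X_0$ and entry $2$ on all remaining coordinates, so $\|\cdot\|_2$ on $z$ and $\|\cdot\|_F$ on $X$ are genuinely different quadratics, and their minimizers over the affine set of least squares solutions coincide only when the kernel of $\mathcal{P}\mathcal{J}\mathcal{Q}$ is invariant under this weight --- which it need not be. A concrete failure: take $n=2$, $m=s=1$, $A=C=[1\;\,1]$, $B=D=[1\;\,1]^T$, $E=F=1$ (all real). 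The least squares (here exact) Hermitian solutions are precisely those with symmetric $X_0$ satisfying $x_{11}+2x_{21}+x_{22}=1$ and arbitrary antisymmetric $X_1,X_2,X_3$. Formula \eqref{eq3.3} yields $X_0=\left[\begin{smallmatrix}1/6 & 1/3\\ 1/3 & 1/6\end{smallmatrix}\right]$, $X_1=X_2=X_3=0$, with $\|X\|_F^2=10/36$, whereas the member of $\mathcal{H}_{LE}$ with all entries of $X_0$ equal to $1/4$ has $\|X\|_F^2=9/36$. Hence the matrix described by \eqref{eq3.3} is not the minimum-Frobenius-norm element of $\mathcal{H}_{LE}$, and uniqueness of the least-$\|z\|_2$ solution does not rescue the claim. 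The standard repair is to renormalize the half-vectorization: replace $K_S,K_A$ by column-orthonormal versions (scale the off-diagonal columns of $K_S$ and all columns of $K_A$ by $\frac{\sqrt{2}}{2}$, with compensating factors $\sqrt{2}$ inside $\vec_S,\vec_A$), so that $\mathcal{Q}^T\mathcal{Q}=I$ and $\|z\|_2=\|X\|_F$; with that convention the transfer is exact and your argument (and the paper's) closes for both \eqref{eq3.2} and \eqref{eq3.3}.
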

\begin{proof}
	By using \eqref{eq2.1}, \eqref{eq2.3}, and Lemma \ref{lem2.1}, we get
	\begin{eqnarray*}
		 \norm{\left(AXB-E, CXD-F\right)}_F^2 &=&  \norm{\left(AXB-E\right)}_F^2+ \norm{\left( CXD-F\right)}_F^2\\
		 &=& \norm{\left(AXB-E\right)_r^R}_F^2+ \norm{\left( CXD-F\right)_r^R}_F^2\\
		 &=& \norm{\left(A_r^R X^R B^R - E_r^R\right)}_F^2+ \norm{\left( C_r^R X^R D^R-F_r^R\right)}_F^2\\
		 &=&\norm{\vec\left(A_r^R X^R B^R - E_r^R\right)}_F^2+ \norm{\vec\left( C_r^R X^R D^R-F_r^R\right)}_F^2.
	\end{eqnarray*}
We have
\begin{eqnarray*}
	\vec\left(A_r^R X^R B^R - E_r^R\right) &=& \left( \left(B^R\right)^T \otimes A_r^R\right)\vec(X^R)-\vec(E_r^R), \\
	\vec\left(C_r^R X^R D^R - F_r^R\right) &=& \left( \left(D^R\right)^T \otimes C_r^R\right)\vec(X^R)-\vec(F_r^R). 
\end{eqnarray*}
By using Lemma \ref{lem2.2} and Lemma \ref{lem2.6}, we get
\begin{equation*}
	\vec(X^R) = \mathcal{J}\vec\left(X_r^R\right) =  \mathcal{J} \mathcal{Q}
	\begin{bmatrix}
		\vec_S(X_0) \\
		\vec_A(X_1) \\
		\vec_A(X_2) \\
		\vec_A(X_3) 		
	\end{bmatrix}.
 \end{equation*}
Thus,
\begin{eqnarray*}
	 \norm{\left(AXB-E, CXD-F\right)}_F^2 &=& \norm{
	 	\begin{bmatrix}
	 		\left(B^R\right)^T \otimes A_r^R \\
	 		\left(D^R\right)^T \otimes C_r^R 
	 \end{bmatrix}\vec\left(X^R\right)-
 \begin{bmatrix}
 	\vec(E_r^R) \\
 	\vec(F_r^R)
\end{bmatrix}}_F^2\\
  &=& \norm{\mathcal{P}\mathcal{J}\mathcal{Q}
  		\begin{bmatrix}
  		\vec_S(X_0) \\
  		\vec_A(X_1) \\
  		\vec_A(X_2) \\
  		\vec_A(X_3) 		
  \end{bmatrix}-
 \begin{bmatrix}
	\vec(E_r^R) \\
	\vec(F_r^R)
\end{bmatrix}}_F^2.
\end{eqnarray*}
Hence, Problem \ref{pb1} can be solved by finding the least squares solutions of the following unconstrained real matrix system: 
\begin{equation}\label{pr1}
	\mathcal{P}\mathcal{J}\mathcal{Q}	
	\begin{bmatrix}
		\vec_S(X_0) \\
		\vec_A(X_1) \\
		\vec_A(X_2) \\
		\vec_A(X_3) 		
	\end{bmatrix}= \begin{bmatrix}
	\vec(E_r^R) \\
	\vec(F_r^R)
\end{bmatrix}.
\end{equation}
By using the fact that $X \in \HQR^{n \times n} \iff X_0 \in \SR^{n \times n}, X_1  \in \ASR^{n \times n}, X_2  \in \ASR^{n \times n}, X_3 \in \ASR^{n \times n}$, and using Lemma \ref{lem2.5}, we have
\begin{equation}\label{pr2}
	\begin{bmatrix}
		\vec(X_0) \\
		\vec(X_1) \\
		\vec(X_2) \\
		\vec(X_3) 		
	\end{bmatrix}=\mathcal{R}
	\begin{bmatrix}
		\vec_S(X_0) \\
		\vec_A(X_1) \\
		\vec_A(X_2) \\
		\vec_A(X_3) 		
	\end{bmatrix}.
\end{equation}
By using Lemma \ref{lem2.7}, the least squares solutions of matrix equation \eqref{pr1} is given by
 \begin{equation}\label{pr3}
 \begin{bmatrix}
		\vec_S(X_0) \\
		\vec_A(X_1)  \\
		\vec_A(X_2) \\
		\vec_A(X_3)
	\end{bmatrix}=\left(\mathcal{P}\mathcal{J}\mathcal{Q}\right)^+
	\begin{bmatrix}
		\vec(E_r^R) \\
		\vec(F_r^R)
	\end{bmatrix} + \left(I_{(2n^2-n)}-\left(\mathcal{P}\mathcal{J}\mathcal{Q}\right)^+\left(\mathcal{P}\mathcal{J}\mathcal{Q}\right)\right)y,
\end{equation}
where $y \in \R^{(2n^2-n)}$ is an arbitrary vector. Hence, we get \eqref{eq3.2} by utilizing \eqref{pr2} and \eqref{pr3}.

In addition, by employing Lemma \ref{lem2.7}, the least squares solution with the least norm for matrix equation \eqref{pr1} is given by
 \begin{equation}\label{pr4}
	\begin{bmatrix}
		\vec_S(X_0) \\
		\vec_A(X_1)  \\
		\vec_A(X_2) \\
		\vec_A(X_3)
	\end{bmatrix}=\left(\mathcal{P}\mathcal{J}\mathcal{Q}\right)^+
	\begin{bmatrix}
		\vec(E_r^R) \\
		\vec(F_r^R)
	\end{bmatrix}.
\end{equation}
Hence, we get \eqref{eq3.3} by utilizing \eqref{pr2} and \eqref{pr4}. 
\end{proof}
\begin{theorem}\label{thm2}
Let $\mathcal{P}$ and $\mathcal{R}$ be in the form of \eqref{eq3.1}, and let $X=X_0+X_1\i+X_2\j+X_3\k \in \QR^{n \times n}$. Additionally, let $\mathcal{J}$ and $\mathcal{Q}$ be as in \eqref{eq2.5} and \eqref{eq2.6}, respectively. Then, the RBME \eqref{eq1.1} has a Hermitian solution $X\in \HQR^{n \times n}$ if and only if
	\begin{equation}\label{eq3.4}
	\left(I_{8ms}-\left(\mathcal{P}\mathcal{J}\mathcal{Q}\right)\left(\mathcal{P}\mathcal{J}\mathcal{Q}\right)^+\right)\begin{bmatrix}
		\vec(E_r^R) \\
		\vec(F_r^R)
	\end{bmatrix}=0.
	\end{equation}	
 In this case, the general solution $X \in  \HQR^{n \times n}$ satisfies
	\begin{equation}\label{eq3.5}
		\begin{bmatrix}
			\vec(X_0) \\
			\vec(X_1) \\
			\vec(X_2) \\
			\vec(X_3) 
		\end{bmatrix} = \mathcal{R}\left(\mathcal{P}\mathcal{J}\mathcal{Q}\right)^+\begin{bmatrix}
		\vec(E_r^R) \\
		\vec(F_r^R)
	\end{bmatrix}+\mathcal{R}\left(I_{(2n^2-n)}-\left(\mathcal{P}\mathcal{J}\mathcal{Q}\right)^+\left(\mathcal{P}\mathcal{J}\mathcal{Q}\right)\right)y,
	\end{equation}
	where $y \in \R^{(2n^2-n)}$ is an arbitrary vector. Furthermore, if \eqref{eq3.4} holds, then the RBME \eqref{eq1.1} has a unique solution $X \in \HQR^{n \times n}$ if and only if 
	\begin{equation}\label{eq3.6}
		\rank \left(\mathcal{P}\mathcal{J}\mathcal{Q}\right)=2n^2-n.
	\end{equation}
	In this case, the unique solution $X \in  \HQR^{n \times n}$ satisfies
	\begin{equation}\label{eq3.7}
			\begin{bmatrix}
			\vec(X_0) \\
			\vec(X_1) \\
			\vec(X_2) \\
			\vec(X_3) 
		\end{bmatrix} = \mathcal{R}\left(\mathcal{P}\mathcal{J}\mathcal{Q}\right)^+\begin{bmatrix}
			\vec(E_r^R) \\
			\vec(F_r^R)
		\end{bmatrix}. 
	\end{equation}
\end{theorem}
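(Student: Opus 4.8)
The plan is to recognize that Theorem \ref{thm2} is the exact-solvability counterpart of Theorem \ref{thm1}: the reduction of the constrained RBME \eqref{eq1.1} to the unconstrained real linear system \eqref{pr1} is identical, so I would reuse it verbatim. Concretely, applying \eqref{eq2.1}, \eqref{eq2.3}, and Lemmas \ref{lem2.1}, \ref{lem2.2}, \ref{lem2.6} exactly as in the proof of Theorem \ref{thm1}, the matrix $X = X_0 + X_1\i + X_2\j + X_3\k$ lies in $\HQR^{n\times n}$ and solves $(AXB,CXD)=(E,F)$ if and only if the reduced coordinate vector $\xi := [\vec_S(X_0)^T, \vec_A(X_1)^T, \vec_A(X_2)^T, \vec_A(X_3)^T]^T \in \R^{2n^2-n}$ solves the real system $\mathcal{P}\mathcal{J}\mathcal{Q}\,\xi = b$, where $b = [\vec(E_r^R)^T, \vec(F_r^R)^T]^T \in \R^{8ms}$. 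The only change from Theorem \ref{thm1} is that here I need an \emph{exact} solution of this system rather than a least squares one, so I would invoke part (i) of Lemma \ref{lem2.7} in place of part (ii).

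Next I would read off the consistency condition. By Lemma \ref{lem2.7}(i), $\mathcal{P}\mathcal{J}\mathcal{Q}\,\xi = b$ admits a solution if and only if $(\mathcal{P}\mathcal{J}\mathcal{Q})(\mathcal{P}\mathcal{J}\mathcal{Q})^+ b = b$; rearranging and recording that $b \in \R^{8ms}$ (so the ambient identity has order $8ms$) yields exactly \eqref{eq3.4}. Still assuming consistency, Lemma \ref{lem2.7}(i) gives the general solution $\xi = (\mathcal{P}\mathcal{J}\mathcal{Q})^+ b + (I_{2n^2-n} - (\mathcal{P}\mathcal{J}\mathcal{Q})^+(\mathcal{P}\mathcal{J}\mathcal{Q}))y$ for arbitrary $y \in \R^{2n^2-n}$. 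Finally, I would recover the full components $\vec(X_0),\ldots,\vec(X_3)$ from $\xi$ by left-multiplying with $\mathcal{R}$ via relation \eqref{pr2}, producing \eqref{eq3.5}.

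For the uniqueness assertion I would again appeal to Lemma \ref{lem2.7}(i): once consistency holds, $\mathcal{P}\mathcal{J}\mathcal{Q}\,\xi = b$ has a unique solution $\xi$ if and only if $\mathcal{P}\mathcal{J}\mathcal{Q}$ has full column rank, and since this matrix has exactly $2n^2 - n$ columns, that is precisely \eqref{eq3.6}. The one point requiring genuine care is translating uniqueness of the coordinate vector $\xi$ into uniqueness of the Hermitian matrix $X$: I would verify that, for $X \in \HQR^{n\times n}$, the correspondence $X \leftrightarrow \xi$ is a bijection — indeed $X_0 \in \SR^{n\times n}$ is determined by and determines $\vec_S(X_0)$, and each $X_t \in \ASR^{n\times n}$ ($t=1,2,3$) is determined by and determines $\vec_A(X_t)$ — so uniqueness transfers faithfully in both directions. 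Under \eqref{eq3.6} the unique solution is $\xi = (\mathcal{P}\mathcal{J}\mathcal{Q})^+ b$, and applying $\mathcal{R}$ via \eqref{pr2} gives \eqref{eq3.7}.

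The main obstacle is not conceptual but bookkeeping: correctly identifying the column count of $\mathcal{P}\mathcal{J}\mathcal{Q}$ as $2n^2 - n = \tfrac{n(n+1)}{2} + 3\cdot\tfrac{n(n-1)}{2}$ (the combined dimension of the symmetric and three antisymmetric reduced coordinates) and the ambient row count as $8ms$, and ensuring the bijection $X \leftrightarrow \xi$ is exploited so that the rank condition \eqref{eq3.6} governs uniqueness of $X$ itself rather than merely of its coordinate representation. Everything else follows mechanically from the reduction already carried out in Theorem \ref{thm1} together with Lemma \ref{lem2.7}(i).
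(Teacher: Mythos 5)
Your proposal is correct and follows essentially the same route as the paper's own proof: the same reduction of the constrained RBME to the unconstrained real system $\mathcal{P}\mathcal{J}\mathcal{Q}\,\xi = b$ via Lemmas \ref{lem2.1}, \ref{lem2.2}, and \ref{lem2.6}, followed by Lemma \ref{lem2.7}(i) for the consistency condition \eqref{eq3.4}, the general solution \eqref{eq3.5}, the rank criterion \eqref{eq3.6}, and the unique solution \eqref{eq3.7}, with $\mathcal{R}$ recovering the full vectorizations. Your explicit verification that the correspondence $X \leftrightarrow \xi$ is a bijection on $\HQR^{n\times n}$ is a point the paper leaves implicit, but it is a refinement of the same argument rather than a different approach.
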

\begin{proof}
	By using Lemmas \ref{lem2.1}, \ref{lem2.2}, and \ref{lem2.6}, we have
	\begin{eqnarray*}
		\left(AXB,CXD\right) = \left(E,F\right) &\iff& 	\left(\left(AXB\right)_r^R,\left(CXD\right)_r^R\right) = \left(E_r^R,F_r^R\right) \\
		& \iff & \left(A_r^R X^R B^R, C_r^R X^R D^R\right) = \left(E_r^R, F_r^R\right) \\
		& \iff & \left(\vec\left(A_r^R X^R B^R\right), \vec\left(C_r^R X^R D^R\right) \right) =
		\left(\vec\left(E_r^R\right), \vec\left(F_r^R\right)\right) \\
		& \iff & \begin{bmatrix}
			\vec\left(A_r^R X^R B^R\right) \\
			\vec\left(C_r^R X^R D^R\right) 
		\end{bmatrix} = 		
	\begin{bmatrix}
		\vec(E_r^R) \\
		\vec(F_r^R)
	\end{bmatrix}\\
& \iff & \begin{bmatrix}
				\left(B^R\right)^T \otimes A_r^R \\
				\left(D^R\right)^T \otimes C_r^R 
			\end{bmatrix}\vec\left(X^R\right)=
			\begin{bmatrix}
				\vec(E_r^R) \\
				\vec(F_r^R)
			\end{bmatrix} \\
		& \iff & \mathcal{P}\mathcal{J}\mathcal{Q}
			\begin{bmatrix}
			\vec_S(X_0) \\
			\vec_A(X_1) \\
			\vec_A(X_2) \\
			\vec_A(X_3) 
		\end{bmatrix} = 	
	\begin{bmatrix}
		\vec(E_r^R) \\
		\vec(F_r^R)
	\end{bmatrix}.
	\end{eqnarray*}
Hence, solving the constrained RBME \eqref{eq1.1} is equivalent to solving the following unconstrained real matrix system:
\begin{equation}\label{pr5}
	\mathcal{P}\mathcal{J}\mathcal{Q}	
	\begin{bmatrix}
		\vec_S(X_0) \\
		\vec_A(X_1) \\
		\vec_A(X_2) \\
		\vec_A(X_3) 		
	\end{bmatrix}= \begin{bmatrix}
		\vec(E_r^R) \\
		\vec(F_r^R)
	\end{bmatrix}.
\end{equation}
By using the fact that $X \in \HQR^{n \times n} \iff X_0 \in \SR^{n \times n}, X_1  \in \ASR^{n \times n}, X_2  \in \ASR^{n \times n}, X_3 \in \ASR^{n \times n}$, and using Lemma \ref{lem2.5}, we have
\begin{equation}\label{pr6}
	\begin{bmatrix}
		\vec(X_0) \\
		\vec(X_1) \\
		\vec(X_2) \\
		\vec(X_3) 		
	\end{bmatrix}=\mathcal{R}
	\begin{bmatrix}
		\vec_S(X_0) \\
		\vec_A(X_1) \\
		\vec_A(X_2) \\
		\vec_A(X_3) 		
	\end{bmatrix}.
\end{equation}
By leveraging Lemma \ref{lem2.7}, we obtain the consistency condition \eqref{eq3.4}. This condition establishes the criterion that ensures a solution to matrix equation \eqref{pr5} and a Hermitian solution for RBME \eqref{eq1.1}. In this case, the solution to matrix equation \eqref{pr5} can be expressed as
	\begin{equation}\label{pr7}
	\begin{bmatrix}
		\vec_S(X_0) \\
		\vec_A(X_1) \\
		\vec_A(X_2) \\
		\vec_A(X_3) 
	\end{bmatrix} = \left(\mathcal{P}\mathcal{J}\mathcal{Q}\right)^+\begin{bmatrix}
		\vec(E_r^R) \\
		\vec(F_r^R)
	\end{bmatrix}+\left(I_{(2n^2-n)}-\left(\mathcal{P}\mathcal{J}\mathcal{Q}\right)^+\left(\mathcal{P}\mathcal{J}\mathcal{Q}\right)\right)y,
\end{equation}
where $y \in \R^{(2n^2-n)}$ is an arbitrary vector. By applying \eqref{pr6} and \eqref{pr7}, we get that the Hermitian solution to RBME \eqref{eq1.1} satisfies \eqref{eq3.5}.

Furthermore, if condition \eqref{eq3.4} is satisfied, applying Lemma \ref{lem2.7} leads to the derivation of the condition \eqref{eq3.6}. This condition signifies the criterion for ensuring a unique solution to matrix equation \eqref{pr5} and a unique Hermitian solution to RBME \eqref{eq1.1}. In this case, the unique solution to matrix equation \eqref{pr5} is given by
	\begin{equation}\label{pr8}
	\begin{bmatrix}
		\vec_S(X_0) \\
		\vec_A(X_1) \\
		\vec_A(X_2) \\
		\vec_A(X_3) 
	\end{bmatrix} = \left(\mathcal{P}\mathcal{J}\mathcal{Q}\right)^+\begin{bmatrix}
		\vec(E_r^R) \\
		\vec(F_r^R)
	\end{bmatrix}.
\end{equation}
By using \eqref{pr6} and \eqref{pr8}, we get that the unique Hermitian solution to RBME \eqref{eq1.1} satisfies \eqref{eq3.7}.
\end{proof}
In \cite{MR4137050}, the authors investigated the Hermitian solution of RBME \eqref{eq1.1} using the CR method. They successfully established the necessary and sufficient conditions for the existence of a solution, along with providing a general expression for it. In the remainder of this section, we will present the findings from \cite{MR4137050}.\\
A reduced biquaternion matrix $A \in \QR^{m \times n}$ is uniquely expressed as $A=A_1+ A_2\j$, where $A_1$, $A_2 \in \C^{m \times n}$. We have $\Psi_A= \left[A_1, A_2\right]  \in \C^{m \times 2n}$ and $\vec(\Psi_A)=\begin{bmatrix}
	\vec(A_1) \\
	\vec(A_2)
\end{bmatrix}.$ The complex representation of matrix $A$, denoted as $h(A)$, is defined as $$
	h(A)=\begin{bmatrix}
		A_1 & A_2\\
		A_2 & A_1                    
	\end{bmatrix}.                         
$$
Given matrices $A=A_1+A_2\j \in \QR^{m \times n}$, $B=B_1+B_2\j \in \QR^{n \times s}$, $C=C_1+C_2\j \in \QR^{m \times n}$, $D=D_1+D_2\j \in \QR^{n \times s}$, $E=E_1+E_2\j \in \QR^{m \times s}$, and $F=F_1+F_2\j \in \QR^{m \times s}$. We have
\begin{equation*}
	\left(AXB, CXD\right)=\left(E, F\right) \iff \left(\vec(\Psi_{AXB}), \vec(\Psi_{CXD})\right) = \left(\vec(\Psi_{E}), \vec(\Psi_{F})\right).
\end{equation*}
Denote

$\mathcal{R}=\diag(K_S, K_A, K_A, K_A)$, \; $\mathcal{U}=\begin{bmatrix}
	K_S & \i K_A & 0 & 0 \\
	0 & 0 & K_A & \i K_A
\end{bmatrix}$, \; $ x= \begin{bmatrix}
\vec_S(\Re(X_1)) \\
\vec_A(\Im(X_1)) \\
\vec_A(\Re(X_2)) \\
\vec_A(\Im(X_2)) 
\end{bmatrix}, 
$\\

$M =h\left(\left(B_1^T \otimes A_1 + B_2^T \otimes A_2\right) + \left(B_1^T \otimes A_2 + B_2^T \otimes A_1\right)\j\right)$, \\

$N=h\left(\left(D_1^T \otimes C_1 + D_2^T \otimes C_2\right) + \left(D_1^T \otimes C_2 + D_2^T \otimes C_1\right)\j\right)$. \\

\noin
According to \cite[Lemma~2.3]{MR4137050}, for $X \in \HQR^{n \times n}$, the authors of \cite{MR4137050} obtained
\begin{eqnarray*}
	\vec(\Psi_{AXB}) &=& h\left(\left(B_1^T \otimes A_1 + B_2^T \otimes A_2\right) + \left(B_1^T \otimes A_2 + B_2^T \otimes A_1\right)\j\right)\mathcal{U}x = M\mathcal{U}x, \\
		\vec(\Psi_{CXD}) &=& h\left(\left(D_1^T \otimes C_1 + D_2^T \otimes C_2\right) + \left(D_1^T \otimes C_2 + D_2^T \otimes C_1\right)\j\right)\mathcal{U}x = N\mathcal{U}x.
\end{eqnarray*}
Thus, the matrix equation $(AXB, CXD) = (E,F)$ for $X \in \HQR^{n \times n}$ is equivalent to
\begin{equation*}
	\begin{bmatrix}
		M \\
		N
	\end{bmatrix}\mathcal{U}x=\begin{bmatrix}
	\vec(E_1) \\
	\vec(E_2)\\
	\vec(F_1) \\
	\vec(F_2)
\end{bmatrix}.
\end{equation*}
Denote
\begin{equation}\label{cr1}
	Q=\begin{bmatrix}
M \\
N
\end{bmatrix}\mathcal{U}, \; Q_1 = \Re(Q), \; Q_2 = \Im(Q),  \; e=\begin{bmatrix}
\vec(\Re(E_1)) \\
\vec(\Re(E_2)) \\
\vec(\Re(F_1)) \\		
\vec(\Re(F_2)) \\
\vec(\Im(E_1)) \\		
\vec(\Im(E_2)) \\	
\vec(\Im(F_1)) \\		
\vec(\Im(F_2)) \\		   	 
\end{bmatrix}.
\end{equation}
We have
\begin{equation}\label{cr2}
	\begin{bmatrix}
		Q_1   \\
		Q_2
	\end{bmatrix}^+= \left[Q_1^+ - H^T Q_2Q_1^+, H^T\right], \;
	\begin{bmatrix}
		Q_1 \\
		Q_2
	\end{bmatrix}^+
	\begin{bmatrix}
		Q_1 \\
		Q_2
	\end{bmatrix}= Q_1^+Q_1+ R R^+, 
\end{equation}
where
\begin{equation}\label{cr3}
	\left.\begin{aligned}
		H &=  R^+ + \left(I-R^+R\right)ZQ_2Q_1^+Q_1^{+T}\left(I- Q_2^TR^+\right), \; \; R=  \left(I- Q_1^+ Q_1\right)Q_2^T	,\\
		Z    & = \left(I+ \left(I- R^+R\right)Q_2Q_1^+Q_1^{+T}Q_2^T\left(I- R^+R\right)\right)^{-1}. 
	\end{aligned}\right\}
\end{equation}
As per \cite{MR4137050}, solving the constrained RBME \eqref{eq1.1} is equivalent to solving the following unconstrained real matrix system:
\begin{equation*}
	\begin{bmatrix}
		Q_1 \\
		Q_2
	\end{bmatrix}x=e.
\end{equation*}
According to \cite[Theorem~3.1]{MR4137050}, the RBME \eqref{eq1.1} has a Hermitian solution $X=X_0+X_1\i+X_2\j+X_3\k \in \HQR^{n \times n}$ if and only if
\begin{equation}\label{eq3.10}
	\left(I_{8ms}-\begin{bmatrix}
		Q_1 \\
		Q_2
	\end{bmatrix}\begin{bmatrix}
	Q_1 \\
	Q_2
\end{bmatrix}^+\right)e=0.
\end{equation}	
In this case, the general solution $X \in  \HQR^{n \times n}$ satisfies
\begin{equation}\label{eq3.11}
	\begin{bmatrix}
		\vec(\Re(X_1)) \\
		\vec(\Im(X_1)) \\
		\vec(\Re(X_2)) \\
		\vec(\Im(X_2)) 
	\end{bmatrix} = \mathcal{R}\left[Q_1^+ - H^T Q_2Q_1^+, H^T\right]e + \mathcal{R}\left(I_{(2n^2-n)}-Q_1^+Q_1- R R^+\right)y,
\end{equation}
where $y \in \R^{(2n^2-n)}$ is an arbitrary vector. Furthermore, if the consistency condition is satisfied, then the RBME \eqref{eq1.1} has a unique solution $X \in \HQR^{n \times n}$ if and only if 
\begin{equation}\label{eq3.12}
	\rank \left(\begin{bmatrix}
		Q_1 \\
		Q_2
	\end{bmatrix}\right)=2n^2-n.
\end{equation}
In this case, the unique solution $X \in  \HQR^{n \times n}$ satisfies
\begin{equation}\label{eq3.13}
	\begin{bmatrix}
		\vec(\Re(X_1)) \\
		\vec(\Im(X_1)) \\
		\vec(\Re(X_2)) \\
		\vec(\Im(X_2)) 
	\end{bmatrix} = \mathcal{R}\left[Q_1^+ - H^T Q_2Q_1^+, H^T\right]e.
\end{equation}
\begin{remark}The concept behind the RR method is to transform the operations of reduced biquaternion matrices into the corresponding operations of the first row block of the real representation matrices. This approach takes full advantage of the special structure of real representation matrices, consequently minimizing the count of floating-point operations required. Moreover, our method exclusively employs real matrices and real arithmetic operations, avoiding the complexities associated with complex matrices and complex arithmetic operations used in the CR method. Consequently, our approach is more efficient and time-saving compared to the CR method.
\end{remark}
\section{Application}\label{sec4}
We will now employ the framework developed in Section \ref{sec3} for finding the least squares Hermitian solution with the least norm of CME \eqref{eq1.1}. The problem can be formulated as follows:
\begin{problem} \label{pb2}
	Given matrices $A,C \in \C^{m \times n}$, $B,D \in \C^{n \times s}$, and $E,F \in \C^{m \times s}$, let
	\begin{equation*}
		\widetilde{\mathcal{H}}_{LE}= \left\{ X \; | \; X \in \HC^{n \times n}, \;  \norm{\left(AXB-E, CXD-F\right)}_F= \min_{\widetilde{X} \in \HC^{n \times n}} \norm{\left(A\widetilde{X}B-E, C\widetilde{X}D-F\right)}_F \right\}.
	\end{equation*}
	Then, find $\widetilde{X}_H \in \widetilde{\mathcal{H}}_{LE}$ such that
	$$\norm{\widetilde{X}_H}_F= \underset{X \in \widetilde{\mathcal{H}}_{LE}}{\min}\norm{X}_F.$$
\end{problem}

It is important to emphasize that complex matrix equations are particular cases of reduced biquaternion matrix equations. Let $A=A_0+A_1\i+A_2\j+A_3\k \in \QR^{m \times n}$, then $A \in \C^{m \times n} \iff A_2=0$, $A_3=0$. Consequently, we can apply the framework established in Section \ref{sec3} to address Problem \ref{pb2}. We take the real representation of matrix $A=A_0+A_1\i \in \C^{m \times n}$ denoted by $\widetilde{A}^R$ as 
\begin{equation}\label{eq4.1}
	\widetilde{A}^R = \begin{bmatrix}
		A_0 & -A_1 \\
		A_1 & A_0
	\end{bmatrix},
\end{equation}
and the first block row of the block matrix $\widetilde{A}^R$ as $\widetilde{A}^R_r=\left[A_0, -A_1\right]$. 

For matrix $X=X_0+X_1\i \in \C^{n \times n}$, we have
\begin{equation}\label{eq4.2}
	\vec(\widetilde{X}_r^R)=\begin{bmatrix}
		\vec(X_0) \\
		-\vec(X_1)
	\end{bmatrix}.
\end{equation}
At first, we need a result establishing the relationship between $\vec(\widetilde{X}^R)$ and $\vec(\widetilde{X}_r^R)$. Following a similar approach to Lemma \ref{lem2.2}, we get $\vec(\widetilde{X}^R)=\widetilde{\mathcal{J}}\vec(\widetilde{X}_r^R)$, where \begin{equation}\label{eq4.3}
	\widetilde{\mathcal{J}}=\begin{bmatrix}
	\widetilde{\mathcal{J}}_0   & 	-\widetilde{\mathcal{J}}_1 \\
	\widetilde{\mathcal{J}}_1   &     	\widetilde{\mathcal{J}}_0
\end{bmatrix}.
\end{equation} We have $\widetilde{\mathcal{J}}_0=\begin{bmatrix}
\widetilde{\mathcal{J}}_{01} \\
\widetilde{\mathcal{J}}_{02} \\
\vdots \\
\widetilde{\mathcal{J}}_{0n} \\
\end{bmatrix}$ and
$\widetilde{\mathcal{J}}_1=\begin{bmatrix}
\widetilde{\mathcal{J}}_{11} \\
\widetilde{\mathcal{J}}_{12} \\
\vdots \\
\widetilde{\mathcal{J}}_{1n} \\
\end{bmatrix}$.
 $\widetilde{\mathcal{J}}_{ij}$ is a block matrix of size $2 \times n$ with $I_n$ at $(i+1,j)^{th}$ position, and the rest of the entries are zero matrices of size $n$, where $i=0,1$ and $j=1,2,\ldots,n$.
 
 To enhance our understanding of above mentioned result, we will examine it in the context of $n = 2$. In this scenario, we have
 $$\widetilde{\mathcal{J}}_0=\begin{bmatrix}
 	\widetilde{\mathcal{J}}_{01} \\
 		\widetilde{\mathcal{J}}_{02}
 \end{bmatrix}=\begin{bmatrix}
 	I_2       &   0        \\
 	0          &   0         \\
 	\hdashline
 	0          &  I_2         \\
 	0          &   0            \\
       \end{bmatrix}, \;
 \widetilde{\mathcal{J}}_1=\begin{bmatrix}
 		\widetilde{\mathcal{J}}_{11} \\
 		\widetilde{\mathcal{J}}_{12}
 \end{bmatrix}=\begin{bmatrix}
 	0       &   0           \\
 	I_2          &   0      \\
 \hdashline
 	0          &  0            \\
 	0          &   I_2         
 	\end{bmatrix}.$$
Next, we will determine the expression for $\vec(\widetilde{X}_r^R)$ when $X \in \HC^{n \times n}$. Following a similar approach to Lemma \ref{lem2.6}, we have $X=X_0+X_1\i \in \HC^{n \times n} \iff \vec(\widetilde{X}_r^R) = \widetilde{\mathcal{Q}}\begin{bmatrix}
 	\vec_S(X_0) \\
 	\vec_A(X_1)
 \end{bmatrix} $, where 
\begin{equation}\label{eq4.4}
	\widetilde{\mathcal{Q}}=\begin{bmatrix}
    K_S  &  0  \\
    0       & -K_A 
\end{bmatrix}.
\end{equation}
Before proceeding, we introduce some notations that will be used in the subsequent results.
\begin{equation}\label{eq4.5}
	\widetilde{\mathcal{P}} = 	\begin{bmatrix}
		(\widetilde{B}^R)^T \otimes \widetilde{A}_r^R \\
		(\widetilde{D}^R)^T \otimes \widetilde{C}_r^R
	\end{bmatrix}, \; \;  \widetilde{\mathcal{R}}=\diag(K_S, K_A).
\end{equation}
\begin{theorem}\label{thm4.1}
	Given matrices $A,C \in \C^{m \times n}$, $B,D \in \C^{n \times s}$, and $E,F \in \C^{m \times s}$, let $X=X_0+X_1\i  \in \C^{n \times n}$. Let $\widetilde{\mathcal{P}}$ and $\widetilde{\mathcal{R}}$ be of the form \eqref{eq4.5}, and let $\widetilde{\mathcal{J}}$ and $\widetilde{\mathbb{Q}}$ be of the form \eqref{eq4.3} and \eqref{eq4.4}, respectively. Then, the set $\widetilde{\mathcal{H}}_{LE}$ of Problem \ref{pb2} can be expressed as
	\begin{equation}\label{eq4.6}
		\widetilde{\mathcal{H}}_{LE}=\left\{X \; \left| \; \begin{bmatrix}
			\vec(X_0) \\
			\vec(X_1)  
		\end{bmatrix}\right.=\widetilde{\mathcal{R}}\left(\widetilde{\mathcal{P}}\widetilde{\mathcal{J}}\widetilde{\mathcal{Q}}\right)^+
		\begin{bmatrix}
			\vec(\widetilde{E}_r^R) \\
			\vec(\widetilde{F}_r^R)
		\end{bmatrix} + \widetilde{\mathcal{R}}\left(I_{n^2}-\left(\widetilde{\mathcal{P}}\widetilde{\mathcal{J}}\widetilde{\mathcal{Q}}\right)^+\left(\widetilde{\mathcal{P}}\widetilde{\mathcal{J}}\widetilde{\mathcal{Q}}\right)\right)y
		\right\},
	\end{equation}
	where $y \in \R^{n^2}$ is an arbitrary vector. Furthermore, the unique solution $\widetilde{X}_{H} \in \widetilde{\mathcal{H}}_{LE}$ to Problem \ref{pb2} satisfies
	\begin{equation}\label{eq4.7}
		\begin{bmatrix}
			\vec(X_0) \\
			\vec(X_1)  
		\end{bmatrix}= \widetilde{\mathcal{R}}\left(\widetilde{\mathcal{P}}\widetilde{\mathcal{J}}\widetilde{\mathcal{Q}}\right)^+	\begin{bmatrix}
			\vec(\widetilde{E}_r^R) \\
			\vec(\widetilde{F}_r^R)
		\end{bmatrix}. 
	\end{equation}	
\end{theorem}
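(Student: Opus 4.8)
The plan is to mirror the argument of Theorem \ref{thm1} verbatim, replacing the $4\times 4$ block real representation by the $2\times 2$ block representation \eqref{eq4.1} tailored to the complex field; indeed, since a complex matrix equation is the special case $A_2=A_3=0$ of an RBME, every building block used in Theorem \ref{thm1} has already been re-established in the complex setting in the text immediately preceding the theorem. First I would reduce the squared residual to a real least squares objective. Using the complex-field Frobenius identity $\norm{A}_F=\norm{\widetilde{A}_r^R}_F$ together with the multiplicative property $(AXB)_r^R=\widetilde{A}_r^R\,\widetilde{X}^R\,\widetilde{B}^R$ (the complex analogues of \eqref{eq2.3} and Lemma \ref{lem2.1}(iii), each verified by direct computation on the $2\times 2$ representation), the quantity $\norm{\left(AXB-E,CXD-F\right)}_F^2$ becomes $\norm{\widetilde{A}_r^R\widetilde{X}^R\widetilde{B}^R-\widetilde{E}_r^R}_F^2+\norm{\widetilde{C}_r^R\widetilde{X}^R\widetilde{D}^R-\widetilde{F}_r^R}_F^2$. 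Vectorizing each summand with the identity $\vec(PYQ)=\left(Q^T\otimes P\right)\vec(Y)$ then produces precisely the coefficient block $\widetilde{\mathcal{P}}$ of \eqref{eq4.5} acting on $\vec(\widetilde{X}^R)$.

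Next I would encode the two structural constraints. Substituting $\vec(\widetilde{X}^R)=\widetilde{\mathcal{J}}\vec(\widetilde{X}_r^R)$ from \eqref{eq4.3}, and then the Hermitian parametrization $\vec(\widetilde{X}_r^R)=\widetilde{\mathcal{Q}}\,\bigl[\vec_S(X_0)^T,\ \vec_A(X_1)^T\bigr]^T$ from \eqref{eq4.4} (the complex analogues of Lemma \ref{lem2.2} and Lemma \ref{lem2.6}), collapses the constrained minimization over $\HC^{n\times n}$ into the unconstrained real least squares problem with coefficient matrix $\widetilde{\mathcal{P}}\widetilde{\mathcal{J}}\widetilde{\mathcal{Q}}$ and right-hand side $\bigl[\vec(\widetilde{E}_r^R)^T,\ \vec(\widetilde{F}_r^R)^T\bigr]^T$, exactly as \eqref{pr1} in the quaternion case. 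Applying Lemma \ref{lem2.7}(ii) to this real system then yields the general least squares solution and the least-norm least squares solution for the reduced unknown $\bigl[\vec_S(X_0)^T,\ \vec_A(X_1)^T\bigr]^T$. Finally, converting back to $\bigl[\vec(X_0)^T,\ \vec(X_1)^T\bigr]^T$ through the relation $\bigl[\vec(X_0)^T,\ \vec(X_1)^T\bigr]^T=\widetilde{\mathcal{R}}\,\bigl[\vec_S(X_0)^T,\ \vec_A(X_1)^T\bigr]^T$ (which is Lemma \ref{lem2.5} applied to the symmetric block $X_0$ and the anti-symmetric block $X_1$, packaged by $\widetilde{\mathcal{R}}=\diag(K_S,K_A)$) delivers the stated formulas \eqref{eq4.6} and \eqref{eq4.7}.

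I do not expect a genuine analytic obstacle here, since the proof is a structural specialization of Theorem \ref{thm1}; the only point requiring care is the dimension bookkeeping. The parametrization uses $\frac{n(n+1)}{2}$ free entries for the real symmetric part $X_0$ and $\frac{n(n-1)}{2}$ free entries for the real anti-symmetric part $X_1$, and since $\frac{n(n+1)}{2}+\frac{n(n-1)}{2}=n^2$, the reduced unknown lives in $\R^{n^2}$. This is what fixes the identity $I_{n^2}$ and the arbitrary vector $y\in\R^{n^2}$ appearing in \eqref{eq4.6}, in contrast to the dimension $2n^2-n$ of the quaternion case. The subsidiary facts to check are that the $2\times 2$ representation \eqref{eq4.1} is norm-preserving on the first block row and is an algebra homomorphism compatible with $\vec$, both of which follow by the same elementary computations used for \eqref{eq2.2}.
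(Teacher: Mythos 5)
Your proposal is correct and matches the paper exactly: the paper's own proof of Theorem \ref{thm4.1} is the single line ``the proof follows along similar lines as Theorem \ref{thm1},'' and your argument is precisely that specialization carried out in detail --- the $2\times 2$ real representation \eqref{eq4.1}, the complex analogues of \eqref{eq2.3}, Lemma \ref{lem2.1}, Lemma \ref{lem2.2}, and Lemma \ref{lem2.6}, reduction to an unconstrained real least squares problem, and Lemma \ref{lem2.7}(ii). Your dimension bookkeeping $\frac{n(n+1)}{2}+\frac{n(n-1)}{2}=n^2$ correctly accounts for the $I_{n^2}$ and $y\in\R^{n^2}$ appearing in \eqref{eq4.6}.
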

\begin{proof}
	The proof follows along similar lines as Theorem \ref{thm1}. 
\end{proof}
\begin{theorem}\label{thm4.2}
	Let $\widetilde{\mathcal{P}}$ and $\widetilde{\mathcal{R}}$ be in the form of \eqref{eq4.5}, and let $X=X_0+X_1\i \in \C^{n \times n}$. Additionally, let $\widetilde{\mathcal{J}}$ and $\widetilde{\mathbb{Q}}$ be as in \eqref{eq4.3} and \eqref{eq4.4}, respectively. Then, the CME \eqref{eq1.1} has a Hermitian solution $X\in \HC^{n \times n}$ if and only if
	\begin{equation}\label{eq4.8}
		\left(I_{4ms}-\left(\widetilde{\mathcal{P}}\widetilde{\mathcal{J}}\widetilde{\mathcal{Q}}\right)\left(\widetilde{\mathcal{P}}\widetilde{\mathcal{J}}\widetilde{\mathcal{Q}}\right)^+\right)\begin{bmatrix}
			\vec(\widetilde{E}_r^R) \\
			\vec(\widetilde{F}_r^R)
		\end{bmatrix}=0.
	\end{equation}	
	In this case, the general solution $X \in  \HC^{n \times n}$ satisfies
	\begin{equation}\label{eq4.9}
		\begin{bmatrix}
			\vec(X_0) \\
			\vec(X_1) 
		\end{bmatrix} = \widetilde{\mathcal{R}}\left(\widetilde{\mathcal{P}}\widetilde{\mathcal{J}}\widetilde{\mathcal{Q}}\right)^+\begin{bmatrix}
			\vec(\widetilde{E}_r^R) \\
			\vec(\widetilde{F}_r^R)
		\end{bmatrix}+\widetilde{\mathcal{R}}\left(I_{n^2}-\left(\widetilde{\mathcal{P}}\widetilde{\mathcal{J}}\widetilde{\mathcal{Q}}\right)^+\left(\widetilde{\mathcal{P}}\widetilde{\mathcal{J}}\widetilde{\mathcal{Q}}\right)\right)y,
	\end{equation}
	where $y \in \R^{n^2}$ is an arbitrary vector. Furthermore, if \eqref{eq4.8} holds, then the CME \eqref{eq1.1} has a unique solution $X \in \HC^{n \times n}$ if and only if 
	\begin{equation}\label{eq4.10}
		\rank \left(\widetilde{\mathcal{P}}\widetilde{\mathcal{J}}\widetilde{\mathcal{Q}}\right)=n^2.
	\end{equation}
	In this case, the unique solution $X \in  \HC^{n \times n}$ satisfies
	\begin{equation}\label{eq4.11}
		\begin{bmatrix}
			\vec(X_0) \\
			\vec(X_1) 
		\end{bmatrix} = \widetilde{\mathcal{R}}\left(\widetilde{\mathcal{P}}\widetilde{\mathcal{J}}\widetilde{\mathcal{Q}}\right)^+\begin{bmatrix}
			\vec(\widetilde{E}_r^R) \\
			\vec(\widetilde{F}_r^R)
		\end{bmatrix}. 
	\end{equation}
\end{theorem}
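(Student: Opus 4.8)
The plan is to mirror the argument of Theorem \ref{thm2} line for line, replacing every reduced-biquaternion object by its complex counterpart and tracking the resulting change in dimensions. The whole proof rests on reducing the constrained complex matrix equation to an equivalent \emph{unconstrained} real linear system of the form $Ax=b$ and then quoting Lemma \ref{lem2.7}; no new idea beyond those already assembled in this section is required.

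First I would transform the equation. Using the complex analogue of Lemma \ref{lem2.1} --- namely $A=B\iff\widetilde{A}^R_r=\widetilde{B}^R_r$ together with the multiplicativity $(AC)^R=\widetilde{A}^R\widetilde{C}^R$ and $(AC)^R_r=\widetilde{A}^R_r\widetilde{C}^R$ --- the system $(AXB,CXD)=(E,F)$ becomes $\widetilde{A}^R_r\widetilde{X}^R\widetilde{B}^R=\widetilde{E}^R_r$ and $\widetilde{C}^R_r\widetilde{X}^R\widetilde{D}^R=\widetilde{F}^R_r$. Vectorising each block with $\vec(ABC)=(C^T\otimes A)\vec(B)$ stacks these into $\widetilde{\mathcal{P}}\,\vec(\widetilde{X}^R)=\bigl[\vec(\widetilde{E}^R_r);\,\vec(\widetilde{F}^R_r)\bigr]$. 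Then I would substitute $\vec(\widetilde{X}^R)=\widetilde{\mathcal{J}}\vec(\widetilde{X}^R_r)$ (the relation preceding \eqref{eq4.3}) and, to encode the Hermitian constraint, $\vec(\widetilde{X}^R_r)=\widetilde{\mathcal{Q}}\bigl[\vec_S(X_0);\,\vec_A(X_1)\bigr]$ (the relation preceding \eqref{eq4.4}), yielding the single real system
\begin{equation*}
\widetilde{\mathcal{P}}\widetilde{\mathcal{J}}\widetilde{\mathcal{Q}}\begin{bmatrix}\vec_S(X_0)\\ \vec_A(X_1)\end{bmatrix}=\begin{bmatrix}\vec(\widetilde{E}^R_r)\\ \vec(\widetilde{F}^R_r)\end{bmatrix}.
\end{equation*}

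With this system in hand, Lemma \ref{lem2.7}(i) supplies everything. The consistency condition $AA^+b=b$ rearranges to \eqref{eq4.8}; here the right-hand side lives in $\R^{4ms}$ (since $\widetilde{E}^R_r,\widetilde{F}^R_r\in\R^{m\times 2s}$ give $\vec(\widetilde{E}^R_r),\vec(\widetilde{F}^R_r)\in\R^{2ms}$), which fixes the identity block as $I_{4ms}$. The unknown vector $\bigl[\vec_S(X_0);\,\vec_A(X_1)\bigr]$ lives in $\R^{n^2}$ because $\tfrac{n(n+1)}{2}+\tfrac{n(n-1)}{2}=n^2$, so the general solution from Lemma \ref{lem2.7}(i) carries the block $I_{n^2}$. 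Converting back to $\bigl[\vec(X_0);\,\vec(X_1)\bigr]$ via Lemma \ref{lem2.5} --- equivalently left-multiplication by $\widetilde{\mathcal{R}}=\diag(K_S,K_A)$ --- produces \eqref{eq4.9}. Finally, since the genuine unknown is the reduced coordinate in $\R^{n^2}$, the uniqueness clause of Lemma \ref{lem2.7}(i) says the solution is unique precisely when $\widetilde{\mathcal{P}}\widetilde{\mathcal{J}}\widetilde{\mathcal{Q}}$ has full column rank $n^2$, giving \eqref{eq4.10}, and the minimum-norm expression $A^+b$ then yields \eqref{eq4.11}.

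I expect no genuine obstacle. The only step requiring verification rather than citation is the multiplicativity of the complex real representation, $(AC)^R_r=\widetilde{A}^R_r\widetilde{C}^R$, which is the complex specialisation of Lemma \ref{lem2.1}(iii); it follows by a direct $2\times2$-block computation exactly as in the biquaternion case and reflects the fact that \eqref{eq4.1} is a faithful homomorphism of $\C^{n\times n}$ into real block matrices. Once that identity and the already-stated tilde versions of Lemmas \ref{lem2.2} and \ref{lem2.6} are in place, the remainder is dimension bookkeeping, so the argument is a faithful transcription of Theorem \ref{thm2} with $2n^2-n$ replaced by $n^2$ and $8ms$ by $4ms$.
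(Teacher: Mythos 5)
Your proposal is correct and is exactly the argument the paper intends: the paper's own proof of this theorem is the one-line remark that it ``follows along similar lines as Theorem \ref{thm2},'' and your transcription --- reducing to the unconstrained real system $\widetilde{\mathcal{P}}\widetilde{\mathcal{J}}\widetilde{\mathcal{Q}}\left[\vec_S(X_0);\,\vec_A(X_1)\right]=\left[\vec(\widetilde{E}^R_r);\,\vec(\widetilde{F}^R_r)\right]$, invoking Lemma \ref{lem2.7}, and bookkeeping the dimensions $2n^2-n\mapsto n^2$ and $8ms\mapsto 4ms$ --- is precisely those lines carried out. The only nitpick is phrasing: in the uniqueness clause the formula $x=A^+b$ arises because the solution is unique (full column rank), not as a minimum-norm selection, but the formula you state is the right one.
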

\begin{proof}
	The proof follows along similar lines as Theorem \ref{thm2}.
\end{proof}
	Our next step is to illustrate how our developed method can solve inverse problems. Here, we examine inverse problems where the spectral constraint involves only a few eigenpair information rather than the entire spectrum. Mathematically, the problem statement is:
\begin{problem}[PDIEP for Hermitian matrix]\label{pb3}
	Given vectors $\{u_1,u_2,\ldots,u_k\} \subset \mathbb{C}^n$, values $\{\lambda_1, \lambda_2, \ldots, \lambda_k\} \subset \mathbb{R}$, find a Hermitian matrix $M \in \HC^{n \times n}$ such that
	\begin{equation*}
		Mu_i = \lambda_i u_i, \; \; \; \; i=1,2, \ldots, k.
	\end{equation*}
\end{problem}
To simplify the discussion, we will use the matrix pair $\left(\Lambda, \Phi\right)$ to describe partial eigenpair information, where
\begin{equation}\label{eq4.12}
	\Lambda= \diag(\lambda_1,\lambda_2,\ldots,\lambda_k) \in \mathbb{R}^{k \times k} , \; \mbox{and} \; \Phi= [u_1,u_2,\ldots,u_k] \in \mathbb{C}^{n \times k} .
\end{equation}
Problem \ref{pb3} can be written as $M\Phi=  \Phi \Lambda$. 
Before moving forward, we will introduce certain notations that will be employed in the subsequent result.
\begin{equation}\label{eq4.13}
\widetilde{\mathcal{S}}= 
		(\widetilde{B}^R)^T \otimes \widetilde{A}_r^R , \; \;  \widetilde{R}=\diag(K_S, K_A), \; \; N=\widetilde{\mathcal{S}}\widetilde{\mathcal{J}}\widetilde{\mathcal{Q}}, \; \; t= 	\vec(\widetilde{E}_r^R) .
\end{equation}
\begin{corollary}\label{cor4.1}
	Given a pair $\left(\Lambda, \Phi\right) \in \mathbb{R}^{k \times k} \times \mathbb{C}^{n \times k} \left(k \leq n\right)$ in the form of \eqref{eq4.12}. Let $\widetilde{\mathcal{S}}$, $\widetilde{\mathcal{R}}$, $N$, and $t$ be in the form of \eqref{eq4.13}, and let $M=M_0+M_1\i \in \C^{n \times n}$. Additionally, let $\widetilde{\mathcal{J}}$ and $\widetilde{\mathbb{Q}}$ be as in \eqref{eq4.3} and \eqref{eq4.4}, respectively. If $\rank(N)= \rank([N, t])$, then the general solution to Problem \ref{pb3} satisfies
	\begin{equation}\label{eq4.14}
	\begin{bmatrix}
		\vec(M_0) \\
		\vec(M_1) 
	\end{bmatrix} = \widetilde{R}\left(\widetilde{\mathcal{S}}\widetilde{\mathcal{J}}\widetilde{\mathcal{Q}}\right)^+\left(
		\vec(\widetilde{E}_r^R) 
	\right)+\widetilde{R}\left(I_{n^2}-\left(\widetilde{\mathcal{S}}\widetilde{\mathcal{J}}\widetilde{\mathcal{Q}}\right)^+\left(\widetilde{\mathcal{S}}\widetilde{\mathcal{J}}\widetilde{\mathcal{Q}}\right)\right)y,
\end{equation}
where $y \in \R^{n^2}$ is an arbitrary vector. 
\end{corollary}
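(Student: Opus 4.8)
The key observation is that Problem \ref{pb3}, written as $M\Phi = \Phi\Lambda$, is a special instance of the complex matrix equation $AXB = E$ — itself the single-equation specialization of CME \eqref{eq1.1} obtained by dropping the second pair $(CXD, F)$. The plan is therefore to identify the correct substitution and then quote Theorem \ref{thm4.2} directly. Concretely, I would set $A = I_n$, $B = \Phi$, $X = M$, and $E = \Phi\Lambda$, so that the constraint $M\Phi = \Phi\Lambda$ becomes exactly $AXB = E$ with $X$ required to be Hermitian. Under this identification the block matrix $\widetilde{\mathcal{P}}$ of \eqref{eq4.5} collapses to its first block row, which is precisely $\widetilde{\mathcal{S}} = (\widetilde{B}^R)^T \otimes \widetilde{A}_r^R$ in \eqref{eq4.13}; similarly the data vector reduces from the stacked $[\vec(\widetilde{E}_r^R); \vec(\widetilde{F}_r^R)]$ to the single block $t = \vec(\widetilde{E}_r^R)$, and $N = \widetilde{\mathcal{S}}\widetilde{\mathcal{J}}\widetilde{\mathcal{Q}}$ plays the role that $\widetilde{\mathcal{P}}\widetilde{\mathcal{J}}\widetilde{\mathcal{Q}}$ played in the two-equation setting.

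First I would make explicit that because $\Lambda \in \R^{k\times k}$ is diagonal (real) and a Hermitian $M$ is sought, the equation $M\Phi = \Phi\Lambda$ genuinely encodes the eigenpair conditions $Mu_i = \lambda_i u_i$ column by column, so solving the constrained CME is equivalent to solving Problem \ref{pb3}. Next I would invoke the derivation inside the proof of Theorem \ref{thm1}/\ref{thm2}, specialized to one equation: the chain of equivalences via Lemmas \ref{lem2.1}, \ref{lem2.2}, and \ref{lem2.6} (in their complex-representation analogues \eqref{eq4.3}, \eqref{eq4.4}) shows that $M\Phi = \Phi\Lambda$ with $M$ Hermitian is equivalent to the real linear system $N\,[\vec_S(M_0); \vec_A(M_1)] = t$. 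Then the consistency of this system is governed by Lemma \ref{lem2.7}(i), whose solvability criterion $NN^+t = t$ is equivalent to the rank condition $\rank(N) = \rank([N, t])$ stated in the hypothesis.

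Once consistency holds, Lemma \ref{lem2.7}(i) gives the general solution $[\vec_S(M_0); \vec_A(M_1)] = N^+ t + (I_{n^2} - N^+N)y$ for arbitrary $y \in \R^{n^2}$, and applying the recovery relation $\widetilde{\mathcal{R}}$ of \eqref{eq4.5}/\eqref{eq4.13} (exactly as in \eqref{pr6}) converts this into $[\vec(M_0); \vec(M_1)]$, yielding \eqref{eq4.14}. Since $N = \widetilde{\mathcal{S}}\widetilde{\mathcal{J}}\widetilde{\mathcal{Q}}$ and $t = \vec(\widetilde{E}_r^R)$, this is precisely formula \eqref{eq4.14}. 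The argument is essentially a specialization of Theorem \ref{thm4.2}'s general-solution case \eqref{eq4.9}, so the cleanest writeup is simply to cite that theorem after verifying the substitution.

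I do not anticipate a serious obstacle, as the result is a corollary by design. The one point requiring mild care is the dimension bookkeeping: after setting $A = I_n$ and $B = \Phi \in \C^{n\times k}$, the right-hand block sizes and the identity $I_{4ms}$ from Theorem \ref{thm4.2} must be re-read with $m = n$ and $s = k$, and one must confirm that $\widetilde{\mathcal{S}}\widetilde{\mathcal{J}}\widetilde{\mathcal{Q}}$ has $n^2$ columns so that the free parameter $y$ indeed lives in $\R^{n^2}$. A second subtlety worth a sentence is that the corollary states only the \emph{consistent} (exact-solution) case rather than the least-squares case, so I would note that the hypothesis $\rank(N) = \rank([N,t])$ is exactly the consistency condition \eqref{eq4.8} specialized to one equation, after which \eqref{eq4.9} gives \eqref{eq4.14} verbatim.
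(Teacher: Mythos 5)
Your proposal is correct and takes essentially the same route as the paper: the paper's proof consists precisely of the substitution $A=I_n$, $X=M$, $B=\Phi$, $E=\Phi\Lambda$, which converts Problem \ref{pb3} into the complex matrix equation $AXB=E$ (a special case of \eqref{eq1.1} over $\C$), and then implicitly invokes the Section \ref{sec4} framework (Theorem \ref{thm4.2} specialized to a single equation) to obtain \eqref{eq4.14}. You merely spell out what the paper leaves implicit, namely the collapse of $\widetilde{\mathcal{P}}$ to $\widetilde{\mathcal{S}}$, the equivalence of $\rank(N)=\rank([N,t])$ with the Moore--Penrose consistency condition of Lemma \ref{lem2.7}, and the dimension bookkeeping for $y\in\R^{n^2}$.
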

\begin{proof}
	By using the transformations
	\begin{equation*}
		A= I_{n}, \; X= M, \; B= \Phi, \; \mbox{and} \; E= \Phi \Lambda, 
	\end{equation*}
	we can find solution to Problem \ref{pb3} by solving matrix equation $AXB= E$ for $X \in \HC^{n \times n}$, which is a special case of matrix equation \eqref{eq1.1} over the complex field. 
\end{proof}
\begin{remark}
	We can choose $y$ arbitrarily in Corollary \ref{cor4.1}; therefore $M$ need not be unique.  
\end{remark}
\section{Numerical Verification}\label{sec5}
In this section, we present numerical examples to verify our results. All calculations are performed on an Intel Core $i7-9700@3.00GHz/16GB$ computer using MATLAB $R2021b$ software. 

First, we present two numerical examples. In the first example, we compute the errors between the computed solution obtained through the RR method and the corresponding actual solution for Problem \ref{pb1}. The second example involves a comparison of the CPU time required to determine the Hermitian solution of RBME \eqref{eq1.1} using both the RR method and the CR method.  Additionally, we compare the errors between the actual solutions and the corresponding computed solutions obtained via the RR method and the CR method for the computation of the Hermitian solution of RBME \eqref{eq1.1}. In both examples, the comparison is made for various dimensions of matrices.
\begin{exam}\label{ex1}
	Let $m=n=2k$, $s=k$, for $k=1:20$. Consider the RBME $(AXB,CXD)= (E,F)$, where
	\begin{eqnarray*}
		A&=&10*rand(m,n)+rand(m,n)\i+rand(m,n)\j+rand(m,n)\k, \\ B&=&rand(n,s)+rand(n,s)\i+rand(n,s)\j+rand(n,s)\k,\\
		C&=&rand(m,n)+10*rand(m,n)\i+4*rand(m,n)\j+rand(m,n)\k,\\
		D&=&rand(n,s)+2*rand(n,s)\i+rand(n,s)\j+rand(n,s)\k.
	\end{eqnarray*}
	Let $S_0 = S_1 = rand(n,n)$, $S_2=5*rand(n,n)$, and $S_3=2*rand(n,n)$. Define
	$$\widetilde{X}=(S_0+S_0^T)+(S_1-S_1^T)\i+(S_2-S_2^T)\j+(S_3-S_3^T)\k.$$
	Let $E=A\widetilde{X}B$ and $F=C\widetilde{X}D$. Hence, $\widetilde{X}$ is the unique minimum norm least squares Hermitian solution of the RBME $(AXB,CXD)=(E,F)$.\\
	Next, we take matrices $A$, $B$, $C$, $D$, $E$, and $F$ as input, and use Theorem \ref{thm1} to calculate the unique minimum norm least squares Hermitian solution $X$ to Problem \ref{pb1}. Let the error $\epsilon= \log10\left(\norm{\widetilde{X}-X}_F\right)$. The relation between the error $\epsilon$ and $k$ is shown in Figure \ref{fig1}.
\end{exam}
\begin{figure}[ht]
	\centering
	\includegraphics[width=1.00\textwidth]{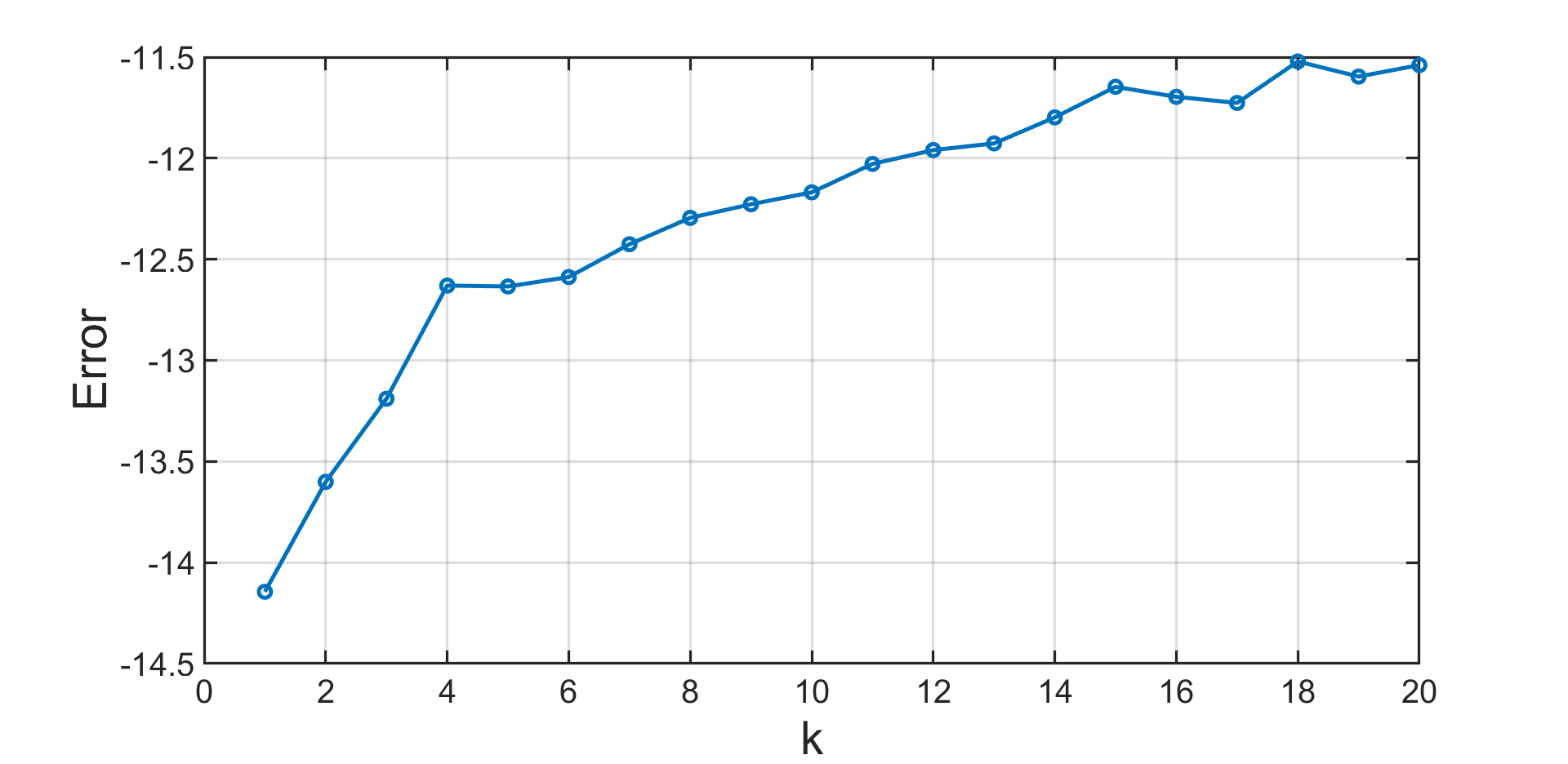} 
	\vspace{-1.0cm}
	\caption{The error in solving Problem \ref{pb1} }
	\label{fig1}
\end{figure}
Based on Figure \ref{fig1}, the error $\epsilon$ between the solution of Problem \ref{pb1} obtained using Theorem \ref{thm1} and the corresponding actual solutions, for various dimensions of matrices, consistently remains less than or equal to -11.5. This demonstrates the effectiveness of RR method in computing solution for Problem \ref{pb1}.
\begin{exam}
	Let $n=2k$, $m=n+16$, $s=n+6$, for $k=1:20$. Consider the RBME $(AXB,CXD)= (E,F)$, where
	\begin{eqnarray*}
		A&=& zeros(m,n)+\begin{bmatrix}
			eye(n) \\
			zeros(16,n)
		\end{bmatrix}\i+zeros(m,n)\j+zeros(m,n)\k, \\
		B &=& zeros(n,s)+zeros(n,s)\i+zeros(n,s)\j+\begin{bmatrix}
			-eye(n)  & zeros(n,6)
		\end{bmatrix}\k, \\
		C &=& zeros(m,n)+zeros(m,n)\i+\begin{bmatrix}
			eye(n) \\
			zeros(16,n)
		\end{bmatrix}\j+zeros(m,n)\k, \\
		D &=& zeros(n,s)+zeros(n,s)\i+ones(n,s)\j+zeros(n,s)\k.
	\end{eqnarray*}
	Let $S_0=zeros\left(\frac{n}{2},\frac{n}{2}\right)$, $S_1=eye\left(\frac{n}{2}\right)$, $S_2=randn(n,n)$, and $S_3=randn(n,n)$. Define
	\begin{equation*}
		\widetilde{X} = toeplitz(1:n)+\begin{bmatrix}
			S_0 &  S_1 \\
			-S_1 & S_0 
		\end{bmatrix}\i+(S_2-S_2^T)\j+(S_3-S_3^T)\k.
	\end{equation*}
	Let $E=A\widetilde{X}B$ and $F=C\widetilde{X}D$. Clearly, the RBME $(AXB,CXD)=(E,F)$ is consistent and $\widetilde{X}$ is its unique Hermitian solution. 
	
	Next, we input matrices $A$, $B$, $C$, $D$, $E$, and $F$. By employing both the RR method (Theorem \ref{thm2}) and the CR method (\cite[Theorem~3.1]{MR4137050}), we calculate the Hermitian solution for RBME \eqref{eq1.1}.  Let $X_1$ and $X_2$ denote the Hermitian solutions obtained through the RR method and the CR method, respectively. Let errors $\epsilon_1 = \log10\left(\norm{\widetilde{X}-X_1}_F\right)$ and $\epsilon_2 = \log10\left(\norm{\widetilde{X}-X_2}_F\right)$.
\end{exam}
\begin{figure}[H]
	\centering
	\includegraphics[width=1.00\textwidth]{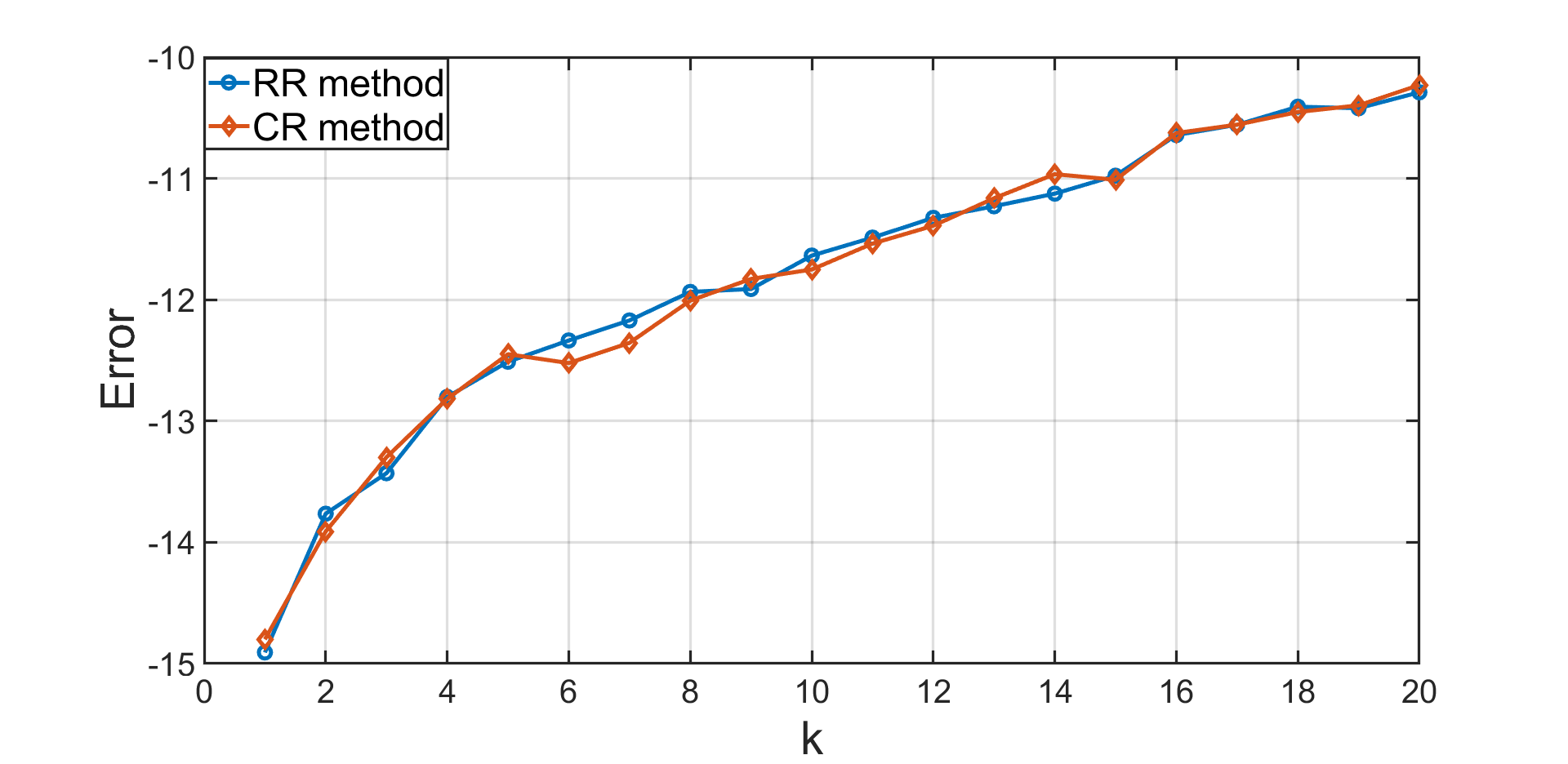} 
	\vspace{-1.0cm}
	\caption{The error in finding the Hermitian solution of RBME \eqref{eq1.1}}
	\label{fig2}
\end{figure}
Figure \ref{fig2} presents a comparison between $\epsilon_1$ and $\epsilon_2$, which is obtained by taking different value of $k$. Notably, the errors obtained from both methods are nearly identical and consistently remains less than $-10$. This demonstrates that the RR method is as effective as the CR method in determining Hermitian solutions for RBME \eqref{eq1.1}.
\begin{figure}[H]
	\centering
	\includegraphics[width=1.00\textwidth]{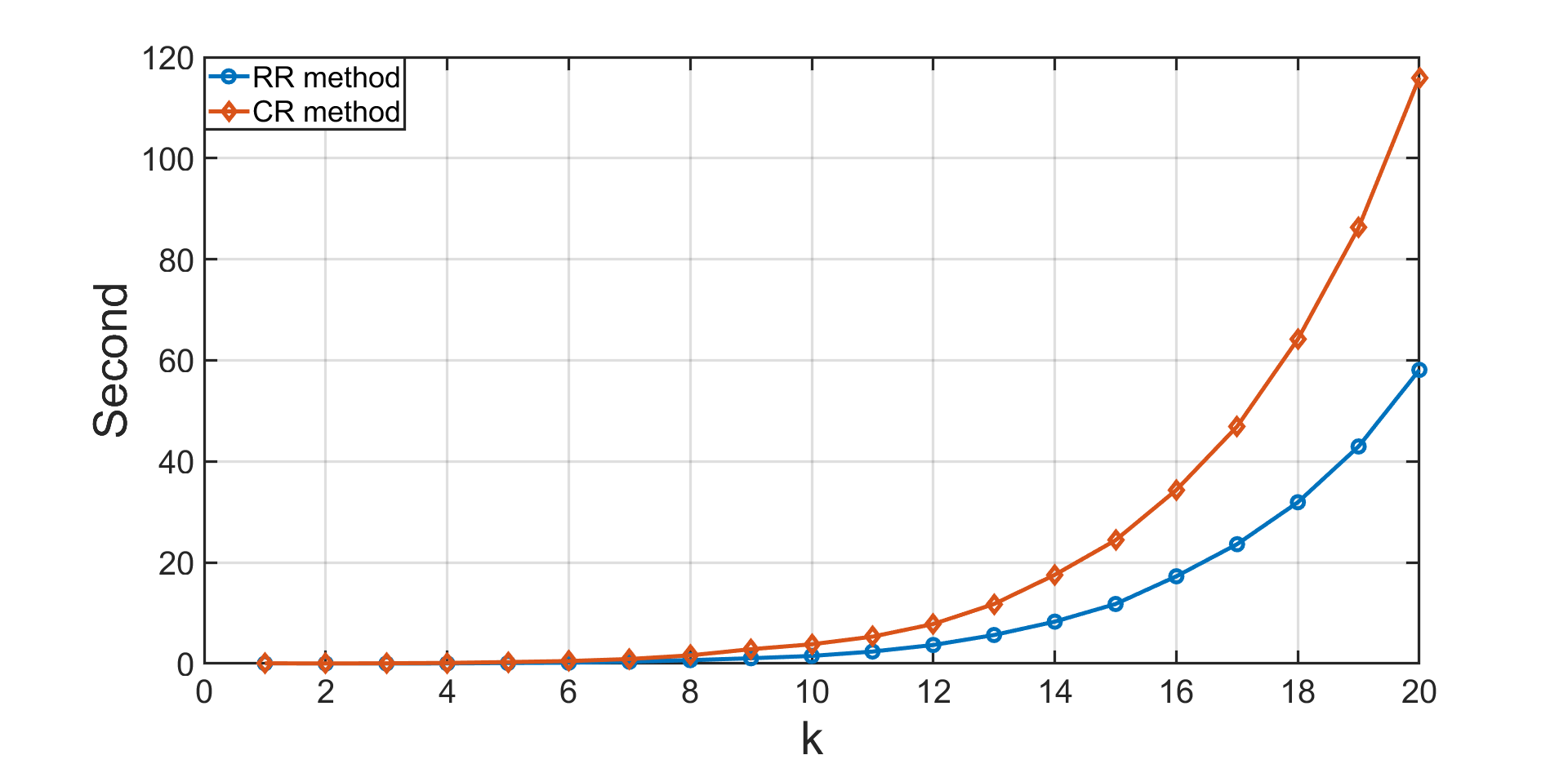} 
	\vspace{-1.0cm}
	\caption{The CPU time for finding Hermitian solution of RBME \eqref{eq1.1}}
	\label{fig3}
\end{figure}

Figure \ref{fig3} depicts a comparison of the CPU time taken by the RR method and the CR method in finding the Hermitian solution of RBME \eqref{eq1.1}. This comparison is conducted for various values of $k$. Notably, the CPU time consumed by RR method is less compared to the CR method. This highlights the enhanced efficiency and time-saving nature of our proposed RR method compared to the CR method.

Next, we illustrate two examples for solving PDIEP for a Hermitian matrix.
\begin{exam}\label{ex5.1}
	To establish test data, we first generate a Hermitian matrix $M$ and compute its eigenpairs. Let $S_0=S_1=randn(5)$, $M_0=triu(S_0)+triu(S_0,1)^T$, and $M_1=S_1-S_1^T$. Define $M=M_0+M_1\i$.
	
Reconstruction from three eigenpairs : Let the prescribed partial eigeninformation be given by $\left(\lambda_1, u_1\right)$, $\left(\lambda_2, u_2\right)$, and $\left(\lambda_3, u_3\right)$, where $\lambda_1=-5.4837$, $\lambda_2=-1.8785$, and $\lambda_3=-0.1774$. Let
	$$u_1 = \begin{bmatrix}
		0.2636 + 0.3383\i   \\
		0.4236 - 0.0100\i   \\
		-0.4570 + 0.0000\i  \\ 
		-0.1785 + 0.2168\i   \\
		-0.5906 + 0.0000\i  
	\end{bmatrix},
	u_2=\begin{bmatrix}
		0.1280 + 0.4097\i   \\
		0.4292 - 0.3207\i  \\
		0.1638 - 0.3434\i  \\
		0.5420 - 0.1538\i   \\
		0.2580 + 0.0000\i  
	\end{bmatrix}, 
	u_3= \begin{bmatrix}
		0.0128 - 0.1852\i \\
		-0.4463 + 0.0116\i\\
		-0.0575 - 0.4873\i\\
		0.4944 + 0.3518\i \\
		-0.3964 + 0.0000\i
	\end{bmatrix}.$$
	Construct the Hermitian matrix $\widehat{M}$ such that $\widehat{M}u_i = \lambda_i u_i$ for $i=1,2,3$. Let $\widehat{\Phi}=\left[u_1, u_2, u_3\right] \in \mathbb{C}^{5 \times 3}$ and $\widehat{\Lambda}= \diag(\lambda_1, \lambda_2, \lambda_3) \in \mathbb{R}^{3 \times 3}$. By using the transformations $A= I_5, \; X= \widehat{M}, \; B=\widehat{\Phi}, \; \mbox{and} \; E= \widehat{\Phi} \widehat{\Lambda}$, we find the Hermitian solution to the matrix equation $AXB=E$. We obtain 
	$$\widehat{M}= \begin{bmatrix}
		-0.7841 + 0.0000\i & -0.4732 - 1.6515\i  & 0.9901 + 0.6641\i  &-0.1203 + 0.1603\i  & 0.9418 + 0.9087\i \\
		-0.4732 + 1.6515\i  &-1.2375 + 0.0000\i   &0.7154 - 0.0670\i   & -0.0395+ 0.7054\i & 1.0881 + 0.2186\i \\
		0.9901 - 0.6641\i   &0.7154 + 0.0670\i     &-1.0019 + 0.0000\i  & -0.7084 + 0.1295\i&
		-1.9644 + 0.0076\i  \\
		-0.1203 - 0.1603\i  &-0.0395 - 0.7054\i  &-0.7084 - 0.1295\i  &-0.8146 + 0.0000\i&
		-0.8648 + 1.1681\i \\
		0.9418 - 0.9087\i   &1.0881 - 0.2186\i      &-1.9644 - 0.0076\i  &-0.8648 - 1.1681\i&
		-1.5564 + 0.0000\i 
	\end{bmatrix}. $$
	Then, $\widehat{M}$ is the desired Hermitian matrix. We have $\epsilon_1=\|\widehat{M}u_1 - \lambda_1 u_1\|_2=2.9787 \times 10^{-15}$, $\epsilon_2=\|\widehat{M}u_2 - \lambda_2 u_2\|_2= 4.0592 \times 10^{-15}$, and $\epsilon_3=\|\widehat{M}u_3 - \lambda_3 u_3\|_2= 2.5327 \times 10^{-15}$. Errors $\epsilon_1$, $\epsilon_2$, and $\epsilon_3$ are in the order of $10^{-15}$ and are negligible. Clearly, $\widehat{M}u_i = \lambda_i u_i$ for $i=1,2,3$.
\end{exam}

\begin{exam}\label{ex5.2}
	To establish test data, we first generate a Hermitian matrix $M$. Let
	\[
	M=\begin{bmatrix}
		2.30+0.00\i & 3.50+2.00\i & 1.20-1.20\i & 2.30-3.00\i & 4.50+2.00\i \\
		3.50-2.00\i & 8.90+0.00\i & 2.35-4.00\i & 4.35+5.20\i & 6.29-6.80\i \\
		1.20+1.20\i & 2.35+4.00\i & 2.00+0.00\i & 4.30-4.20\i & 6.20-7.30\i \\
		2.30+3.00\i & 4.35-5.20\i & 4.30+4.20\i & 1.00+0.00\i & 3.00-8.60\i \\
		4.50-2.00\i & 6.29+6.80\i & 6.20+7.30\i & 3.00+8.60\i & 6.40+0.00\i 
		\end{bmatrix}.
	\]
	Let $\left(\Lambda, \Phi\right)$ denote its eigenpairs. We have $\Lambda= \diag(\lambda_1,\ldots, \lambda_5) \in \mathbb{R}^{5 \times 5}$ and $\Phi=\left[u_1, u_2, u_3, u_4, u_5\right] \in \mathbb{C}^{5 \times 5}$, where 
	$$\left[\lambda_1, \lambda_2, \lambda_3, \lambda_4, \lambda_5\right]=\left[-12.0692, \, -6.3846 , \, 3.8845 , \, 8.8081, \, 26.3613\right],$$ and their corresponding eigenvectors\\ 
	
	$\; \; \; \; \; \; \; \; \; u_1= \begin{bmatrix}
		-0.3035-0.1808\i \\
		0.0558-0.0876\i \\
		-0.4616+0.1624\i  \\
		0.1136+0.5876\i \\	
		0.5165+0.0000\i  
	\end{bmatrix}, u_2 =  \begin{bmatrix}
	-0.0754+0.2769\i \\
	0.3253-0.4400\i  \\
	-0.1970-0.4505\i \\
	0.3006+0.2681\i \\
	-0.4628+0.0000\i
\end{bmatrix}, u_3 = \begin{bmatrix}
  -0.1272-0.8033\i \\
   0.2812-0.1182\i  \\
    -0.0469+0.2233\i \\
    0.2046-0.2034\i \\
     -0.3321+0.0000\i 
\end{bmatrix},$\\

$ \; \; \; \; \; \; \; \; \;  \; \; \; \; \;  \; \; \; \; \;  \; \; \; \; \;  \; \; \; \; \;  \; \; \; \; \; u_4 = \begin{bmatrix}
  -0.2920- 0.1086\i \\
  -0.1898+ 0.5073\i \\
   -0.1913- 0.5563\i \\
    0.4244- 0.2667\i \\
    0.1110+ 0.0000\i
\end{bmatrix}, u_5 =\begin{bmatrix}
		0.1779-0.0527\i \\
		 0.3751-0.4033\i \\
	    0.2429-0.2485\i \\
	    0.1608-0.3452\i \\
		0.6296+0.0000\i
	\end{bmatrix}.
	$\\
	
		\textbf{Case $\mathbf{1}$.} Reconstruction from one eigenpair $(k=1)$:
	Let the prescribed partial eigeninformation be given by 
	\[
	\widehat{\Lambda} =\lambda_4 \in \mathbb{R} \; \textrm{and} \; \; \widehat{\Phi} = u_4 \in \mathbb{C}^{5 \times 1}.
	\]
	Construct the Hermitian matrix $\widehat{M}$ such that $\widehat{M}u_i = \lambda_i u_i$ for $i=4$. By using the transformations $A= I_5, \; X= \widehat{M}, \; B=\widehat{\Phi}, \; \mbox{and} \; E= \widehat{\Phi} \widehat{\Lambda}$, we find the Hermitian solution to the matrix equation $AXB=E$. We obtain
	\[\widehat{M}=
	\begin{bmatrix}
		0.3945+0.0000\i & 0.0032+1.5622\i & 1.1249-1.3710\i & -0.8515-1.1113\i & 	-0.2523-0.0938\i \\
		0.0032-1.5622\i & 1.5238+0.0000\i & -2.6575-2.1895\i & -2.1790+ 1.6625\i & -0.1878+0.5019\i \\
		1.1249+1.3710\i & -2.6575+2.1895\i & 1.9423+0.0000\i & 0.7065- 3.0186\i & 	-0.1981-0.5762\i \\
		-0.8515+1.1113\i & -2.1790-1.6625\i & 0.7065+3.0186\i & 1.2314+ 0.0000\i & 	0.4061-0.2552\i \\
		-0.2523+0.0938\i & -0.1878-0.5019\i & -0.1981+0.5762\i & 0.4061+ 0.2552\i & 0.0458+0.0000\i
		\end{bmatrix}.
	\]
		Then, $\widehat{M}$ is the desired Hermitian matrix. \\
		
			\textbf{Case $\mathbf{2}$.} Reconstruction from two eigenpairs $(k=2)$:
		Let the prescribed partial eigeninformation be given by 
		\[
		\widehat{\Lambda} =\diag(\lambda_2, \lambda_5) \in \mathbb{R}^{2 \times 2} \; \textrm{and} \; \; \widehat{\Phi} = \left[u_2, u_5\right]\in \mathbb{C}^{5 \times 2}.
		\]
		Construct the Hermitian matrix $\widehat{M}$ such that $\widehat{M}u_i = \lambda_i u_i$ for $i=2,5$. By using the transformations $A= I_5, \; X= \widehat{M}, \; B=\widehat{\Phi}, \; \mbox{and} \; E= \widehat{\Phi} \widehat{\Lambda}$, we find the Hermitian solution to the matrix equation $AXB=E$. We obtain
		\[\widehat{M}=
		\begin{bmatrix}
			-0.0573+0.0000\i &  2.9931 + 1.2268\i &   2.2043 + 1.6423\i &   0.5577 + 0.2617\i & 	3.1106 - 0.5499\i \\
			2.9931 - 1.2268\i &   5.1050 + 0.0000\i &   4.7608 - 1.0023\i &   5.7820 + 2.4404\i & 7.6635 - 8.2068\i \\
			2.2043 - 1.6423\i &   4.7608 + 1.0023\i &   0.2463 + 0.0000\i &   4.4811 + 1.4877\i & 	4.1500 - 5.1284\i \\
			0.5577 - 0.2617\i &   5.7820 - 2.4404\i &   4.4811 - 1.4877\i &   1.0747 + 0.0000\i & 3.3003 - 6.2445\i \\
			3.1106 + 0.5499\i &   7.6635 + 8.2068\i &   4.1500 + 5.1284\i &   3.3003 + 6.2445\i & 	7.7224 + 0.0000\i 
	\end{bmatrix}.
		\]
		Then, $\widehat{M}$ is the desired Hermitian matrix. \\
		
			\textbf{Case $\mathbf{3}$.} Reconstruction from three eigenpairs $(k=3)$:
		Let the prescribed partial eigeninformation be given by 
		\[
		\widehat{\Lambda} =\diag(\lambda_1, \lambda_3, \lambda_5) \in \mathbb{R}^{3 \times 3} \; \textrm{and} \; \; \widehat{\Phi} = \left[u_1, u_3, u_5\right]\in \mathbb{C}^{5 \times 3}.
		\]
		Construct the Hermitian matrix $\widehat{M}$ such that $\widehat{M}u_i = \lambda_i u_i$ for $i=1,3,5$. By using the transformations $A= I_5, \; X= \widehat{M}, \; B=\widehat{\Phi}, \; \mbox{and} \; E= \widehat{\Phi} \widehat{\Lambda}$, we find the Hermitian solution to the matrix equation $AXB=E$. We obtain
		\[\widehat{M}=
		\begin{bmatrix}
			 1.3834 + 0.0000\i &   3.3377 - 0.0836\i &  -0.4340 - 0.2088\i   & 3.0774 - 1.0913\i &  	5.1844 + 1.9720\i \\
			3.3377 + 0.0836\i &   5.5549 + 0.0000\i &   5.6800 - 1.2204\i &  6.9930 + 2.8290\i & 6.3976 - 7.2483i \\
			-0.4340 + 0.2088\i &   5.6800 + 1.2204\i &   0.1969 + 0.0000\i &   2.5946 - 2.2959\i & 6.6286 - 5.5009\i \\
			3.0774 + 1.0913\i &   6.9930 - 2.8290\i &   2.5946 + 2.2959\i &  -0.6425 + 0.0000\i & 	1.6758 - 8.5541\i \\
			5.1844 - 1.9720\i &   6.3976 + 7.2483\i &   6.6286 + 5.5009\i &   1.6758 + 8.5541\i & 	6.7611 + 0.0000\i
				\end{bmatrix}.
		\]
		Then, $\widehat{M}$ is the desired Hermitian matrix. 
			\begin{table}[H]
			\centering
			\begin{tabular}{cccccc}
				\toprule%
				\multicolumn{2}{c}{\textbf{Case $\mathbf{1}$ $(k=1)$}} & \multicolumn{2}{c}{\textbf{Case $\mathbf{2}$ $(k=2)$}} &
				\multicolumn{2}{c}{\textbf{Case $\mathbf{3}$ $(k=3)$}}\\
				\cmidrule(lr){1-2}  \cmidrule(lr){3-4} \cmidrule(lr){5-6}%
				Eigenpair & $\norm{\widehat{M} u_i- \lambda_i u_i}_2$ & Eigenpairs &  $\norm{\widehat{M} u_i- \lambda_i u_i}_2$ &
				Eigenpairs &  $\norm{\widehat{M} u_i- \lambda_i u_i}_2$ \\
				\midrule
				$\left(\lambda_4, u_4\right)$& $3.1005
				 \times 10^{-15}$  & $\left(\lambda_2, u_2\right)$   & $1.7590 \times 10^{-14}$ &
				$\left(\lambda_1, u_1\right)$   & $2.5702 \times 10^{-14}$  \\
				&  & $\left(\lambda_5, u_5\right)$  & $1.3918 \times 10^{-14}$ &
				$\left(\lambda_3, u_3\right)$   & $2.3303 \times 10^{-15}$ \\ 
					&  &   &  &
				$\left(\lambda_5, u_5\right)$   & $3.1112 \times 10^{-15}$ \\ 
				\bottomrule
			\end{tabular}
			\caption{Residual $\norm{\widehat{M} u_i- \lambda_i u_i}_2$ for Example \ref{ex5.2}} \label{tab1}
		\end{table}
\end{exam}
From Table \ref{tab1}, we find that the residual $\norm{\widehat{M} u_i- \lambda_i u_i}_2$, for $i=4$ in Case $1$, for $i=2,5$ in Case $2$, and for $i=1,3,5$ in Case $3$, is in the order of $10^{-14}$ and is negligible. This demonstrates the effectiveness of our method in solving Problem \ref{pb3}.

\section{Conclusion}
	We have introduced an efficient method to obtain the least squares Hermitian solutions of the reduced biquaternion matrix equation $(AXB, CXD) = (E, F)$. Our method involved transforming the constrained reduced biquaternion least squares problem into an equivalent unconstrained least squares problem of a real matrix system. This was achieved by utilizing the real representation matrix of the reduced biquaternion matrix and leveraging its specific structure. Additionally, we have determined the necessary and sufficient conditions for the existence and uniqueness of the Hermitian solution, along with a general form of the solution. These conditions and the general form of the solution were previously derived by Yuan et al. $(2020)$ using the complex representation of reduced biquaternion matrices. In comparison, our approach exclusively involved real matrices and utilized real arithmetic operations, resulting in enhanced efficiency. We have also used our developed method to solve partially described inverse eigenvalue problems over the complex field. Finally, we have provided numerical examples to demonstrate the effectiveness of our method and its superiority over the existing method. 

	\bibliography{paper4g}
\end{document}